\newtheorem{thm}{Theorem}[section]%[chapter]
\newtheorem{proposition}[thm]{Proposition}
\newtheorem{prop}[thm]{Proposition}
\newtheorem{corollary}[thm]{Corollary}
\newtheorem{clm}[thm]{Claim}
\newcommand\ex{\ensuremath{\mathrm{ex}}}
\newcommand\cG{{\mathcal G}}
\newcommand\cH{{\mathcal H}}
\newcommand\cL{{\mathcal L}}
\newcommand\cN{{\mathcal N}}
\newcommand\cS{{\mathcal S}}
\renewcommand{\S}{\mathcal S}
\newtheorem*{thm*}{Theorem}
\newtheorem*{prop*}{Proposition}
\newcommand{\ignore}[1]{}
\title{On Tur\'{a}n problems for suspension hypergraphs}
\author{
Xin Cheng\thanks{\small School of Mathematics and Statistics, Northwestern Polytechnical University and Xi'an-Budapest Joint Research Center for Combinatorics, Xi'an 710129, Shaanxi, P.R. China. Email:
\small \texttt{xincheng@mail.nwpu.edu.cn}.}\,, \hspace{0.2em}
D\'{a}niel Gerbner\thanks{\small Alfr\'ed R\'enyi Institute of Mathematics. Email:
\small \texttt{gerbner.daniel@renyi.hu}.}\,, \hspace{0.2em} 
Hilal Hama Karim\thanks{\small Department of Computer Science and Information Theory, Faculty of Electrical Engineering and Informatics, Budapest University of Technology and Economics, Műegyetem rkp. 3., H-1111 Budapest, Hungary. E-mail: \texttt{hilal.hamakarim@edu.bme.hu}.}\,, \hspace{0.2em} 
Junpeng Zhou\thanks{\small Department of Mathematics, Shanghai University, Shanghai 200444, P.R. China. Email:
\small \texttt{junpengzhou@shu.edu.cn}.} \thanks{\small Newtouch Center for Mathematics of Shanghai University, Shanghai 200444, P.R. China.}}
\date{}
\begin{document}

\maketitle

\begin{abstract}
For a given graph $F$, the $r$-uniform suspension of $F$ is the $r$-uniform hypergraph obtained from $F$ by taking $r-2$ new vertices and adding them to every edge. In this paper, we consider Tur\'{a}n problems on suspension hypergraphs, and we obtain several general and exact results. 
\end{abstract}

{\noindent{\bf Keywords}: Tur\'{a}n number, hypergraph, suspension}

{\noindent{\bf AMS subject classifications:} 05C35, 05C65}

\section{Introduction}
A \textit{hypergraph $\cH=(V(\cH),E(\cH))$} consists of a vertex set $V(\cH)$ and a set $E(\cH)$ of hyperedges, where each hyperedge in $E(\cH)$ is a nonempty subset of $V(\cH)$. If $|e|=r$ for every $e\in E(\cH)$, then $\cH$ is called an \textit{$r$-uniform hypergraph} ($r$-graph for short). The \textit{degree} $d_\cH(v)$ of a vertex $v$ is the number of hyperedges containing $v$ in $\cH$.

Let $\mathcal{F}$ be an $r$-graph.
A hypergraph $\cH$ is called \textit{$\mathcal{F}$-free} if $\cH$ does not contain $\mathcal{F}$ as a subhypergraph. 
The \textit{Tur\'{a}n number} ${\rm{ex}}_r(n,\mathcal{F})$ of $\mathcal{F}$ is the maximum number of hyperedges in an $\mathcal{F}$-free $r$-graph on $n$ vertices. When $r=2$, we write $\mathrm{ex}(n,\mathcal{F})$ instead of $\mathrm{ex}_2(n,\mathcal{F})$.

Hypergraph Tur\'{a}n problems are among central topics in extremal combinatorics \cite{FS} and are notably more challenging than their graph counterparts. Some natural questions are surprisingly difficult. For example, the Tur\'{a}n number of $K_4^3$, the 3-uniform 4-vertex complete hypergraph, remains unknown. In this paper we study Turán problems for a specific class of hypergraphs, called suspensions.

For a given graph $F$, the \textit{$r$-uniform suspension} (or briefly $r$-suspension) $\S^r F$ of $F$ is the $r$-graph obtained from $F$ by taking $r-2$ new vertices and adding them to every edge. Sidorenko \cite{Si} first introduced this notion for trees, referring to it as \textit{enlargement}. In Keevash's survey \cite{Ke}, it is termed \textit{extended $k$-trees}, but when applied to complete graphs, it is referred to as suspension or, alternatively, as \textit{cone}. Similarly, S\'{o}s, Erd\H{o}s and Brown \cite{SEB} also use the term cone and specifically call $\S^3 C_{\ell}$ a \textit{wheel}. Alon \cite{alon} uses the term \textit{augmentation}.  Bollob\'{a}s, Leader and Malvenuto \cite{BLM} studied $\S^rK_k$, naming it \textit{daisy}. In \cite{GPa}, Gerbner and Patk\'os uniformly refer to them as suspension and collect the results on Tur\'{a}n problems on suspensions in Subsection 5.2.5. 

There are several results that ``accidentally'' deal with suspensions. For example, one of the most studied hypergraph Tur\'an problems concerns $K_4^{3-}$, which is obtained from $K_4^3$ by deleting an edge. The best known bounds are $2\binom{n}{3}/7\le \ex_3(n,K_4^{3-})\le 0.2871\binom{n}{3}$, due to Frankl and F\"uredi \cite{FF1} and Baber and Talbot \cite{BT}, respectively. Observe that $K_4^{3-}=\S^3K_3$. More generally, the unique $r$-graph consisting of 3 hyperedges on $r+1$ vertices is $\S^rK_3$. The current best bound on their Tur\'an number for general $r$ is due to Sidorenko \cite{Si2}. In the case $r=4$, Gunderson and Semeraro \cite{GS} showed $\ex_4(n,\S^4K_3)\le n\binom{n}{3}/16$, and that this bound is sharp for infinitely many $n$, and asymptotically sharp in general. 

Bollob\'{a}s, Leader and Malvenuto \cite{BLM} studied $\S^rK_k$, focusing on the case $k=4$. They conjectured that $\ex_r(n,\S^rK_k)/\binom{n}{r}$ goes to 0 as $r$ increases, but this was disproved by Ellis, Ivan and Leader \cite{EIL}. Other papers deal with the Tur\'an number of the suspension of complete bipartite graphs \cite{Mu} or even cycles \cite{Muk}.

\smallskip

Let us introduce some definitions that are used throughout the paper. Given an $r$-graph $\cH$ and a set $S$ of $k$-elements ($k\leq r-2$), the \textit{link hypergraph} $\cL(\cH,S)$ is the $(r-k)$-graph obtained by removing $S$ from the hyperedges of $\cH$ that contain $S$. If $S=\{v\}$, then we write $\cL_v$ instead of $\cL(\cH,\{v\})$. 
In the case $r-k=2$, we call $\cL(\cH,S)$ the \textit{link graph} of $S$. Then an $r$-graph is $\cS^rF$-free if and only if the link graph of every $(r-2)$-set is $F$-free.

An \textit{$(n,r,k,\lambda)$-design} is an $n$-element $r$-graph with the property that each $k$-element set of vertices is contained in exactly $\lambda$ hyperedges. It is easy to see and well-known that some divisibility conditions are implied. The Existence Theorem due to Keevash \cite{Ke2} states that if these necessary divisibility conditions are satisfied and $n$ is sufficiently large, then there exists an $(n,r,k,\lambda)$-design.

Most of the papers dealing with suspensions use some specialized version of the following simple observation. 

\begin{proposition}\label{simpli}
    Let $k<r$. For any graph $F$ and $n$, we have $\ex_r(n,\cS^rF)\le \binom{n}{r-k}\ex_k(n-r+k,\cS^kF)/\binom{r}{k}$. In particular, $\ex_r(n,\cS^rF)\le \binom{n}{r-2}\ex(n-r+2,F)/\binom{r}{2}=(1+o(1))2\binom{n}{r}\ex(n,F)/n^2$. 
\end{proposition}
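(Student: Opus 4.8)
The plan is to prove the first inequality by a double-counting argument relating copies of $\cS^rF$ in a host hypergraph to copies of $\cS^kF$ in link hypergraphs, and then to derive the ``in particular'' statement by taking $k=2$ and doing the asymptotic bookkeeping.

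First I would set up the counting. Let $\cH$ be an $\cS^rF$-free $r$-graph on $n$ vertices. For a $(r-k)$-element set $T\subseteq V(\cH)$, consider the link hypergraph $\cL(\cH,T)$, which is a $k$-graph on (at most) $n-r+k$ vertices. The key observation is that $\cL(\cH,T)$ must be $\cS^kF$-free: if it contained a copy of $\cS^kF$ on some vertex set disjoint from $T$ — which we can arrange since $\cS^kF$ has $v(F)+k-2$ vertices and $n$ is large, or more carefully by noting that a copy of $\cS^kF$ inside $\cL(\cH,T)$ together with $T$ produces a copy of $\cS^rF$ in $\cH$ (the $r-k$ ``new'' vertices of the outer suspension being exactly $T$, appended to every hyperedge of the inner $\cS^kF$) — then $\cH$ would contain $\cS^rF$, a contradiction. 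Hence $|E(\cL(\cH,T))|\le \ex_k(n-r+k,\cS^kF)$ for every such $T$.

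Next I would count incidences between hyperedges of $\cH$ and the $(r-k)$-sets $T$. Each hyperedge $e$ of $\cH$ (of size $r$) contributes to $\cL(\cH,T)$ for every choice of $(r-k)$-subset $T\subseteq e$, i.e. for $\binom{r}{r-k}=\binom{r}{k}$ sets $T$, and in each such link it appears as a hyperedge $e\setminus T$ of size $k$. Therefore
\[
\binom{r}{k}\,|E(\cH)| \;=\; \sum_{T\in \binom{V(\cH)}{r-k}} |E(\cL(\cH,T))| \;\le\; \binom{n}{r-k}\ex_k(n-r+k,\cS^kF),
\]
which rearranges to the claimed bound $\ex_r(n,\cS^rF)\le \binom{n}{r-k}\ex_k(n-r+k,\cS^kF)/\binom{r}{k}$.

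For the ``in particular'' part, take $k=2$, so $\cS^2F=F$ and the bound reads $\ex_r(n,\cS^rF)\le \binom{n}{r-2}\ex(n-r+2,F)/\binom{r}{2}$; the fact that an $r$-graph is $\cS^rF$-free iff every $(r-2)$-link graph is $F$-free, already noted in the excerpt, makes the $k=2$ case especially transparent. The final asymptotic identity $\binom{n}{r-2}/\binom{r}{2}=(1+o(1))\,2\binom{n}{r}/n^2$ follows since $\binom{n}{r-2}=\binom{n}{r}\cdot \frac{r(r-1)}{(n-r+1)(n-r+2)}=\binom{n}{r}\cdot\frac{r(r-1)}{n^2}(1+o(1))$, and dividing by $\binom{r}{2}=r(r-1)/2$ gives the factor $2/n^2$; replacing $\ex(n-r+2,F)$ by $\ex(n,F)$ only changes things by a $1+o(1)$ factor (for fixed $r$ and $F$). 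There is no real obstacle here: the only point requiring a moment of care is the disjointness/embedding step showing the link of every $(r-k)$-set is $\cS^kF$-free, and the routine part is the asymptotic estimate at the end.
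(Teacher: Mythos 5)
Your proof is correct and is essentially the paper's own argument: observe that the link of every $(r-k)$-set in an $\cS^rF$-free $r$-graph is $\cS^kF$-free, then double count pairs (hyperedge, $(r-k)$-subset), each hyperedge being counted $\binom{r}{k}$ times. The asymptotic bookkeeping for the $k=2$ case (which the paper states without detail) is also handled adequately.
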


Now let us consider trees with $t$ edges. In the graph case, the Erd\H os-S\'os conjecture \cite{Er} states that $\ex(n,T)\le (t-1)n/2$. The conjecture is known to hold for several classes of trees, e.g., for paths by the Erd\H os-Gallai theorem \cite{EGa} and for spiders \cite{FHL}.  
A \textit{hypertree} is a hypergraph obtained by adding hyperedges one by one the following way. Each hyperedge other than the first has one new vertex and the rest of its vertices are contained in a hyperedge added earlier. Note that it is also called the \textit{tight $r$-tree}. Kalai conjectured (see \cite{FF2}) that if $T^r$ is an $r$-uniform hypertree with $t$ hyperedges, then $\ex_r(n,T^r)\le (t-1)\binom{n}{r-1}/r$. Kalai's conjecture is also known to hold for various classes of hypertrees, see e.g. \cite{FJKMV1}.

It is easy to see that graph trees are hypertrees. The suspension of a tree is also a hypertree by the same ordering of the edges. Applying Proposition \ref{simpli}, we obtain that if a tree satisfies the Erd\H os-S\'os conjecture, then $\cS^rT$ satisfies Kalai's conjecture. For other trees with $t$ edges, the trivial bound is $\ex(n,T)\le (t-1)n$, which is improved to $t(t-1)n/(t+1)$ in \cite{Ste}. For hypertrees $T^r$, the trivial bound is $\ex_r(n,T^r)\le (t-1)\binom{n}{r-1}$. Combining Proposition \ref{simpli} with the bounds in the graph case, we obtain that for any tree $T$ with $t$ edges, $\ex_r(n,\cS^rT)\le 2t(t-1)\binom{n}{r-1}/(r(t+1))$.

Further, let us mention that the upper bound $\ex_r(n,\cS^rT)\le (t-1)\binom{n}{r-1}/r$ is sharp for the tree $T$ that satisfies the Erd\H os-S\'os conjecture and infinitely many $n$, as shown by the following construction. We take an $(n,r+t-2,r-1,1)$-design, i.e., an $n$-vertex $(r+t-2)$-graph where each $(r-1)$-set is contained in exactly one hyperedge. Such a hypergraph exists by the Existence Theorem of Keevash \cite{Ke2} if $n$ is sufficiently large and some necessary divisibility conditions are satisfied. Then we take the $r$-subsets of each hyperedge as hyperedges of a new $r$-graph. The resulting hypergraph has the desired number of hyperedges and for each $(r-2)$-set, its link graph consists of vertex-disjoint copies of $K_t$, thus the link graph is $T$-free.

\smallskip

Observe that Proposition \ref{simpli} implies that $\ex_r(n,\cS^rF)=O(n^{r-2}\ex(n,F))$. For each graph $F$ we have mentioned so far, this bound is sharp. In fact, the only example we know where this is not sharp is the matching $M_t$ with $t$ edges. Observe that $\cS^rM_t$ consists of $t$ hyperedges having the same $r-2$ vertices as pairwise intersections (such a hypergraph is called a \textit{sunflower}). Such intersection problems are widely studied, see e.g. \cite{GPa}. Frankl and F\"uredi \cite{FF} showed that $\ex_r(n,\cS^rM_2)=(1+o(1))6\binom{n}{r-2}/(r(r-1))$. Chung and Frankl \cite{CF} determined $\ex_3(n,\cS^3M_t)$. In particular, $\ex_3(n,\cS^3M_t)=O(n)$, which together with Proposition \ref{simpli} implies that $\ex_r(n,\cS^rM_t)=O(n^{r-2})$ for $r\ge 3$. Note that if we take each $r$-set contained in the hyperedges of an $(n,r+2t-3,r-2,1)$-design, then we obtain an $\cS^rM_t$-free hypergraph. Frankl and F\"uredi \cite{FF2} conjecture that this lower bound is asymptotically sharp.
Recently, Brada\v{c}, Buci\'{c} and Sudakov \cite{BBS} determined the correct order of magnitude of the Tur\'{a}n number of sunflowers of arbitrary uniformity, in terms of both the size of the underlying graph and the size of the sunflower. In particular, they obtained $\ex_r(n,\cS^rM_t)=\Theta(n^{r-2})$ for $r\geq4$.

Next, we present an upper bound for the Tur\'{a}n number of suspensions of graphs with chromatic number $k$. 

\begin{thm}\label{asy}
    For any graph $F$ with $\chi(F)=k$, we have $\ex_r(n,\cS^rF)\le \ex_r(n,\cS^rK_k)+o(n^r)$.
\end{thm}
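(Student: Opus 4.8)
The plan is to show that every $\cS^r F$-free $r$-graph $\cH$ on $n$ vertices can be turned into an $\cS^r K_k$-free $r$-graph by deleting only $o(n^r)$ hyperedges; applying this to an extremal $\cS^r F$-free hypergraph then gives the theorem. As observed after the definition of the link graph, $\cH$ is $\cS^r F$-free exactly when the link graph of every $(r-2)$-set is $F$-free, and it is $\cS^r K_k$-free exactly when every such link graph is $K_k$-free, so it suffices to delete $o(n^r)$ hyperedges so that all $(r-2)$-links become $K_k$-free. (If $F$ has no edge the statement is trivial, so we may assume $k\ge 2$.)

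The key ingredient is a purely graph-theoretic fact: \emph{for every graph $F$ with $\chi(F)=k$ and every $\varepsilon>0$ there is an $m_0$ such that any $F$-free graph $G$ on $m\ge m_0$ vertices can be made $K_k$-free by deleting at most $\varepsilon m^2$ edges.} I would prove this from the removal lemma together with supersaturation. Set $t=|V(F)|$, so that a proper $k$-colouring embeds $F$ into the complete $k$-partite graph $K_k(t)$ with all parts of size $t$. Let $\delta=\delta(\varepsilon,k)>0$ be the constant from the removal lemma for $K_k$: every $m$-vertex graph with at most $\delta m^k$ copies of $K_k$ becomes $K_k$-free after deleting at most $\varepsilon m^2$ edges. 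It then suffices to show that for $m$ large, an $F$-free graph $G$ on $m$ vertices has fewer than $\delta m^k$ copies of $K_k$. Suppose not, and let $\cK$ be the $k$-uniform hypergraph on $V(G)$ whose hyperedges are the vertex sets of the copies of $K_k$ in $G$; then $\cK$ has at least $\delta m^k$ hyperedges. By a theorem of Erd\H{o}s (the hypergraph analogue of the K\H{o}v\'{a}ri--S\'{o}s--Tur\'{a}n theorem), for $m$ large $\cK$ contains a complete $k$-partite $k$-graph with parts $V_1,\dots,V_k$, each of size $t$. Every transversal of $V_1\times\cdots\times V_k$ then spans a copy of $K_k$ in $G$, so choosing, for $u\in V_i$ and $v\in V_j$ with $i\ne j$, a transversal through $u$ and $v$ shows $uv\in E(G)$; hence $G[V_1\cup\cdots\cup V_k]$ contains $K_k(t)$ and therefore $F$, contradicting $F$-freeness.

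It remains to carry out the cleanup. Fix $\varepsilon>0$, list the $(r-2)$-sets as $S_1,\dots,S_N$ with $N=\binom{n}{r-2}$, and process them in order, maintaining a running hypergraph $\cH'$ (initially $\cH$). When we reach $S_j$, its current link $\cL(\cH',S_j)$ is a subgraph of $\cL(\cH,S_j)$, hence $F$-free, so by the graph fact above (applicable once $n$ is large enough in terms of $\varepsilon$, $F$, $r$) we may delete at most $\varepsilon m^2\le\varepsilon n^2$ of its edges to make it $K_k$-free; we then remove the corresponding hyperedges from $\cH'$. The only point needing care is that deleting hyperedges is monotone on all links: it can only remove edges from the other links, never add them, so any link already made $K_k$-free stays $K_k$-free. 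After all $N$ steps every $(r-2)$-link of $\cH'$ is thus $K_k$-free, i.e.\ $\cH'$ is $\cS^r K_k$-free, while at most $N\varepsilon n^2\le\varepsilon n^r$ hyperedges have been removed. Since $\varepsilon>0$ was arbitrary, we obtain $\ex_r(n,\cS^r F)\le\ex_r(n,\cS^r K_k)+o(n^r)$. I expect the graph fact to be the main obstacle, specifically the supersaturation step converting a positive density of copies of $K_k$ into a blow-up of $K_k$ (and hence a copy of $F$); the hypergraph part is then just bookkeeping that works precisely because deleting hyperedges only ever helps.
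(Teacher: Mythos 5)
Your proof is correct, but it takes a genuinely different route from the paper's. The paper first proves a hypergraph analogue of the Alon--Shikhelman counting result (Proposition~\ref{alsh}), applies it with $H=\cS^rK_k$ and $H'=\cS^rF$ (after checking that $\cS^rF$ sits inside a blowup of $\cS^rK_k$) to conclude that an $\cS^rF$-free $r$-graph contains only $o(n^{|V(\cS^rK_k)|})$ copies of $\cS^rK_k$, and then invokes the \emph{hypergraph} removal lemma to destroy all those copies with $o(n^r)$ hyperedge deletions. You instead work entirely at the level of the $(r-2)$-links: your key lemma is the purely graph-theoretic statement that an $F$-free graph can be made $K_k$-free by deleting $\varepsilon m^2$ edges, which you derive from the \emph{graph} removal lemma for $K_k$ together with the Erd\H{o}s complete-$k$-partite supersaturation step --- the same ingredient the paper uses inside the proof of Proposition~\ref{alsh}, but only in its graph incarnation. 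Your link-by-link cleanup is sound: each of the $\binom{n}{r-2}$ links costs at most $\varepsilon n^2$ hyperedges, deleting hyperedges only ever removes edges from other links, so the total is at most $\varepsilon n^r$ and the final hypergraph is $\cS^rK_k$-free. What your route buys is that it avoids the hypergraph removal lemma and the general hypergraph form of Proposition~\ref{alsh} altogether, needing only the classical clique removal lemma for graphs; what the paper's route buys is Proposition~\ref{alsh} itself as a statement of independent interest and an argument that does not depend on the special link structure of suspensions. Both give the same ineffective $o(n^r)$ error term.
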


In the graph case, Theorem \ref{asy} is equivalent to the Erd\H os-Stone-Simonovits theorem \cite{ES1966,ES1946}, which determines the asymptotics of $\ex(n,F)$ for every non-bipartite graph $F$. For larger uniformity, not many asymptotic results are implied, since we do not even know the asymptotics of $\ex_r(n,\cS^rK_k)$, with the exception of $r=4$, $k=3$. As we have mentioned, Gunderson and Semeraro \cite{GS} showed $\ex_4(n,\cS^4K_3)\le n\binom{n}{3}/16$, and that this bound is sharp for infinitely many $n$, and asymptotically sharp in general. Moreover, Proposition 23 in \cite{GS} shows that their construction is $\cS^4F$-free for every non-bipartite graph $F$. This together with Theorem \ref{asy} implies the following result.

\begin{corollary}
    For any graph $F$ with chromatic number 3, we have $\ex_4(n,\S^4F)= (1+o(1))n\binom{n}{3}/16$.
\end{corollary}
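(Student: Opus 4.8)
The plan is to sandwich $\ex_4(n,\cS^4 F)$ between matching upper and lower bounds, each of order $(1+o(1))n\binom{n}{3}/16$, by combining Theorem~\ref{asy} with the results of Gunderson and Semeraro \cite{GS}.

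For the upper bound I would apply Theorem~\ref{asy} with $r=4$ and $k=\chi(F)=3$, which gives $\ex_4(n,\cS^4 F)\le \ex_4(n,\cS^4 K_3)+o(n^4)$. Then I would invoke the bound $\ex_4(n,\cS^4 K_3)\le n\binom{n}{3}/16$ of \cite{GS}. Since $n\binom{n}{3}/16=\Theta(n^4)$, the additive $o(n^4)$ term is absorbed into the multiplicative factor, yielding $\ex_4(n,\cS^4 F)\le (1+o(1))n\binom{n}{3}/16$.

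For the lower bound I would use the extremal construction of \cite{GS} that realizes their bound for $\cS^4 K_3$: it has $n\binom{n}{3}/16$ hyperedges for infinitely many $n$ and $(1+o(1))n\binom{n}{3}/16$ hyperedges in general, and it is $\cS^4 K_3$-free. The crucial input is their Proposition~23, which states that this same hypergraph is in fact $\cS^4 F$-free for every non-bipartite graph $F$. Since $\chi(F)=3$, the graph $F$ is non-bipartite, so the construction is $\cS^4 F$-free, and hence $\ex_4(n,\cS^4 F)\ge (1+o(1))n\binom{n}{3}/16$. Combining the two inequalities gives the stated asymptotics.

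There is essentially no obstacle: the corollary is a direct synthesis of Theorem~\ref{asy} with results quoted from \cite{GS}. The only points requiring care are bookkeeping — verifying that $o(n^4)$ is of smaller order than $n\binom{n}{3}/16$ so that it can be folded into $(1+o(1))$, and noting that $\chi(F)=3$ does place $F$ in the scope of \cite[Proposition~23]{GS}, since any graph of chromatic number at least $3$ is non-bipartite (even though such an $F$ need not contain a triangle).
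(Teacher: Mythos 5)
Your proposal is correct and follows exactly the paper's reasoning: the upper bound comes from Theorem~\ref{asy} combined with the Gunderson--Semeraro bound $\ex_4(n,\cS^4K_3)\le n\binom{n}{3}/16$, and the lower bound from their construction together with Proposition~23 of \cite{GS}, which shows it is $\cS^4F$-free for every non-bipartite $F$. This is precisely how the paper derives the corollary, so there is nothing to add.
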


We improve the above result for odd cycles to a sharp bound 
for infinitely many $n$.

\begin{thm}\label{shar}
  
   For every $k\geq 1$, we have $\ex_4(n,\cS^4C_{2k+1})\le n\binom{n}{3}/16$ for $n$ sufficiently large. This bound is sharp for infinitely many $n$.
\end{thm}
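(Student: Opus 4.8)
The plan is to prove the upper bound $\ex_4(n,\cS^4C_{2k+1})\le n\binom{n}{3}/16$ directly and then exhibit a matching construction. For the upper bound, recall that a $4$-graph $\cH$ is $\cS^4C_{2k+1}$-free if and only if the link graph $\cL_S$ of every $2$-set $S$ is $C_{2k+1}$-free. Since an odd cycle $C_{2k+1}$ is a subgraph of any graph containing an odd cycle of length at most $2k+1$, being $C_{2k+1}$-free does \emph{not} immediately force bipartiteness; this is the first subtlety. Instead, I would use the fact that a $C_{2k+1}$-free graph on $m$ vertices still has at most $m^2/4$ edges once $m$ is large — this follows from the Erd\H{o}s--Stone--Simonovits theorem (or, more elementarily, from the Bondy--Simonovits bound together with a stability argument: a $C_{2k+1}$-free graph with close to $m^2/4$ edges is nearly bipartite, and a genuinely non-bipartite $C_{2k+1}$-free graph has far fewer edges). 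So for $n$ large, every link graph $\cL_S$ has at most $(n-2)^2/4 \le n^2/4$ edges. Double counting incidences between $2$-sets $S$ and hyperedges: each hyperedge of $\cH$ contains $\binom{4}{2}=6$ two-sets, and contributes one edge to each of the corresponding link graphs, while also each edge of a link graph $\cL_S$ is a $2$-set $T$ disjoint from $S$ with $S\cup T\in E(\cH)$. Counting pairs $(S,e)$ with $S\in\binom{e}{2}$ gives $6|E(\cH)| = \sum_{S} e(\cL_S) \le \binom{n}{2}\cdot \frac{n^2}{4}$, which yields $|E(\cH)| \le \binom{n}{2} n^2 /24$. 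This is too weak — it is off by the wrong power.

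The correct approach must exploit more structure, and here I would follow Gunderson--Semeraro. The key is that in a $C_{2k+1}$-free (indeed $C_3$-free suffices for their bound) link graph we get at most $(n-2)^2/4$ edges, but to land on $n\binom{n}{3}/16 \sim n^4/96$ we should double count \emph{vertex--hyperedge--pair} configurations differently: for each vertex $v$, the link $3$-graph $\cL_v$ has the property that for each $2$-set $S\ni v$... no — rather, for each $1$-set $\{v\}$ we look at $\cL_v$, a $3$-graph, and observe that $\cH$ is $\cS^4C_{2k+1}$-free forces that in $\cL_v$, for every vertex $u$, the link graph $\cL_{\cL_v}(u) = \cL_{\{v,u\}}$ is $C_{2k+1}$-free, hence has at most $(n-3)^2/4$ edges. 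Thus $e(\cL_v) \cdot 3 = \sum_{u} e(\cL_{\{v,u\}}) \le n \cdot n^2/4$, giving $e(\cL_v)\le n^3/12$, and then $4|E(\cH)| = \sum_v e(\cL_v) \le n^4/12$, i.e. $|E(\cH)|\le n^4/48$. Still a factor $2$ off. The extra factor of $2$ must come from a parity/orientation argument specific to the triangle-free structure: in the Gunderson--Semeraro construction, the relevant link graphs are not just triangle-free but \emph{bipartite} with a consistent $2$-coloring across the whole hypergraph, and one shows that the ``doubling'' in the naive count is genuine only for bipartite-type extremal configurations, saving the factor $2$. I expect the clean way is to mimic Proposition 23 and the main counting lemma of \cite{GS} verbatim, replacing ``triangle'' with ``odd cycle of length $\le 2k+1$'' and invoking the large-$n$ bound $\ex(m,C_{2k+1})\le m^2/4$ from Erd\H{o}s--Stone; their argument that extracts the extra factor of $2$ only uses that the link graphs avoid \emph{some} odd structure, so it should carry over.

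For the lower bound (sharpness for infinitely many $n$), I would simply reuse the Gunderson--Semeraro extremal construction for $\ex_4(n,\cS^4K_3)$, which achieves exactly $n\binom{n}{3}/16$ for infinitely many $n$. Since $C_{2k+1}$ contains a triangle only when $k=1$, I must check that this construction — whose link graphs are triangle-free, hence bipartite in the relevant sense — is in fact $C_{2k+1}$-free for all $k$. This is exactly the content of Proposition 23 of \cite{GS}: the link graphs of their construction are bipartite, and a bipartite graph contains no odd cycle at all, in particular no $C_{2k+1}$. Hence the same construction is $\cS^4C_{2k+1}$-free for every $k\ge 1$, and it has $n\binom{n}{3}/16$ hyperedges, matching the upper bound.

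The main obstacle, as flagged above, is obtaining the precise constant $1/16$ rather than $1/48$ in the upper bound: the naive iterated double counting loses a factor of $2$, and recovering it requires the subtler structural argument from \cite{GS} showing that a link $3$-graph all of whose $2$-set-links are bipartite cannot have more than $n^3/24$ edges (not merely $n^3/12$). I would spend the bulk of the proof verifying that this structural step, originally stated for triangle-free links, only uses the absence of odd cycles and therefore applies unchanged when the forbidden odd cycle has length $2k+1>3$.
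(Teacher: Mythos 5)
There is a genuine gap in the upper-bound part. Your plan is to run the Gunderson--Semeraro double count ``verbatim, replacing triangle with odd cycle of length $2k+1$'', on the expectation that their argument ``only uses that the link graphs avoid some odd structure''. That is not the case: the lower bound in their count of pairs $(A,B)$ with $A\in\cH$, $B\notin\cH$, $|A\cap B|=3$ rests on the fact that any $5$-set spans at most $2$ hyperedges, i.e.\ that the hypergraph is $\cS^4K_3$-free (three hyperedges on five vertices form exactly $\cS^4K_3$). When you only forbid $\cS^4C_{2k+1}$ with $k\ge 2$, the link graphs of $2$-sets may contain triangles, so the hypergraph may contain many copies of $\cS^4K_3$ and the GS count breaks at precisely this step; nothing in your proposal explains how to recover it. This is where the paper does its real work: it proves a general transfer theorem (Theorem~\ref{3chrom}) in which the loss of $3t(r-1)$ in the lower bound coming from the $t$ suspended triangles is compensated in the upper bound, using two nontrivial inputs you do not supply. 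First, $\ex(n,K_3,C_{2k+1})=o(n^2)$ (Gy\H{o}ri--Li) controls how many suspended triangles can be assigned to a single $(r-1)$-set. Second, Proposition~\ref{triv} (proved via the auxiliary graphs $H(t)$, $Q(t)$ and K\H{o}v\'ari--S\'os--Tur\'an) shows that a $C_{2k+1}$-free graph containing a triangle has a vertex of degree at most $cn$ with $c<1/4$; this yields, for every suspended triangle, an $(r-1)$-set of codegree $<cn$, and the resulting deficit in $\sum a_i^2$ (via a convexity argument) beats the $3t(r-1)$ loss. Your own preliminary counts (landing at roughly $n^4/48$) correctly diagnose that a factor $2$ is missing, but the vague appeal to ``bipartite-type extremal configurations'' does not produce it, and for $k\ge 2$ the link graphs are not forced to be bipartite or even triangle-free, as you yourself note at the outset.

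The lower-bound part of your proposal is fine and coincides with the paper: the Gunderson--Semeraro construction has bipartite link graphs for all $2$-sets (Proposition~23 of~\cite{GS}), hence is $\cS^4F$-free for every non-bipartite $F$, in particular $\cS^4C_{2k+1}$-free, and attains $n\binom{n}{3}/16$ for infinitely many $n$. So the sharpness claim is settled exactly as you say; the missing content is the structural argument that converts ``$C_{2k+1}$-free'' into a usable substitute for ``triangle-free'' in the counting.
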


Let us consider the smallest forest that is not a matching, i.e., the disjoint union of $P_3$ and $K_2$, which we denote by $P_3\cup K_2$. The $3$-graph $\cS^3(P_3\cup K_2)$ has 3 hyperedges on 6 vertices, thus it is one of the forbidden hypergraphs in the celebrated 6-3 theorem of Ruzsa and Szemer\'edi \cite{RS}. Surprisingly, we did not find any result on the Tur\'an number of $\cS^3(P_3\cup K_2)$ alone. Here, we present a sharp result.

\begin{thm}\label{fores} 
   For every $n>33$, $\ex_3(n,\cS^3(P_3\cup K_2)\le \binom{\lfloor (3n-1)/4\rfloor}{2}/3+(3n-1)(n+1)/32$.
   
   Moreover, this bound is sharp apart from a constant additive term, and if $n=8k+5$, then $\ex_3(n,\cS^3(P_3\cup K_2))= \left\lfloor\binom{\lfloor (3n-1)/4\rfloor}{2}/3+(3n-1)(n+1)/32\right\rfloor$.
\end{thm}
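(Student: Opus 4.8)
The plan is to work entirely with vertex links: as noted in the introduction, $\cH$ is $\cS^3(P_3\cup K_2)$-free exactly when $\cL_v$ is $(P_3\cup K_2)$-free for every $v$. So I would first establish a structural dichotomy for $(P_3\cup K_2)$-free graphs: up to isolated vertices such a graph is a matching, a star $K_{1,s}$, or a subgraph of $K_4$. (If a vertex $b$ has degree $\ge 4$, every edge must meet each path $b_i-b-b_j$, which forces the graph to be the star at $b$; if $\Delta=3$ the same argument confines it to a $K_4$ on $b\cup N(b)$; if $\Delta=2$ and a $P_3$ is present, listing the cases again lands inside a $K_4$.) A subgraph of $K_4$ has at most six edges and gives its vertex $\cH$-degree at most six, so such links perturb $3|E(\cH)|=\sum_v|\cL_v|$ only by $O(n)$. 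I then partition $V(\cH)=S\sqcup M$, where $S$ consists of the ``star vertices'' $v$ with $\cL_v$ a star of at least seven edges (so with a unique centre $c(v)$), and $M$ is everything else; apart from $O(n)$ low-degree exceptions, every $w\in M$ has $\cL_w$ a matching, and these exceptions I treat as negligible for the general bound.

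The heart of the argument is that $S$ pairs off and that this pins down all hyperedges. For $v\in S$ every hyperedge through $v$ is $\{v,c(v),w\}$, so $\cL_{c(v)}$ contains a star centred at $v$ with $\ge 7$ edges; hence $\Delta(\cL_{c(v)})\ge 7$, $\cL_{c(v)}$ is a star, and its centre is $v$. Thus $c$ is a fixed-point-free involution of $S$, splitting $S$ into pairs $P_1,\dots,P_t$; for a pair $P=\{v,v'\}$ the hyperedges meeting $P$ are exactly $\{v,v',w\}$ with $w$ in some set $L_P$, and $|\cL_v|=|\cL_{v'}|=|L_P|$. Short case checks then yield: $L_P\subseteq M$ (a leaf $w\in S$ would force $c(w)\in P$, impossible as the two vertices of $P$ are already partners); no hyperedge has exactly one vertex in $S$ (its link-edge would put $c(v)$ into $M$); and the only hyperedges with two or three vertices in $S$ are the pair hyperedges. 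Hence $E(\cH)=\bigsqcup_i\{\{v_i,v_i',w\}:w\in L_{P_i}\}\sqcup E(\cH[M])$. Moreover, since $\cL_w$ is a matching for (all but $O(n)$) $w\in M$, every pair inside $M$ lies in at most one hyperedge of $\cH[M]$, so $\cH[M]$ is essentially linear and $|E(\cH[M])|\le\lfloor\binom{|M|}{2}/3\rfloor$, with equality available from a Steiner triple system when $|M|\equiv 1,3\pmod 6$.

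Writing $\mu=|M|$ and letting $t$ be the number of pairs (so $\mu+2t\le n$), this gives $|E(\cH)|\le t\mu+\binom{\mu}{2}/3+O(n)$, which increases in $t$, so taking $t=(n-\mu)/2$,
$$t\mu+\binom{\mu}{2}\Big/3=\frac{(n-\mu)\mu}{2}+\frac{\mu(\mu-1)}{6}=\frac{\mu(3n-1-2\mu)}{6},$$
a concave function of $\mu$ maximised near $\mu=(3n-1)/4$; evaluating at the optimal integer $\mu$ and rearranging yields the bound in the statement. The matching construction reverses this: pick $\mu$ near $(3n-1)/4$ with $\mu\equiv 1,3\pmod 6$ and $n-\mu$ even, put a Steiner triple system on a $\mu$-set $M$, pair up the remaining $n-\mu$ vertices, and for every pair $\{v,v'\}$ and every $w\in M$ add the hyperedge $\{v,v',w\}$; then $\cL_w$ ($w\in M$) is the disjoint union of the Steiner matching on $M$ and the matching given by the pairs (hence a matching), and $\cL_v$ ($v\notin M$) is a star, so the hypergraph is $\cS^3(P_3\cup K_2)$-free and has $\binom{\mu}{2}/3+\tfrac{(n-\mu)\mu}{2}$ hyperedges. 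For $n=8k+5$ the choice $\mu=6k+3$ satisfies all the constraints (an STS on $6k+3$ points exists and $n-\mu=2k+2$ is even), so the construction meets the upper bound and $\ex_3(n,\cS^3(P_3\cup K_2))$ equals the displayed floor.

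I expect the main obstacle to be the exact statement for $n=8k+5$: one must show an extremal hypergraph contains no $K_4$-type links and no wasteful hyperedges, i.e.\ a stability argument forcing the ``Steiner-system plus pairs'' structure (any deviation can be traded for extra hyperedges), and then identify the optimal integer $\mu$ of the right parity and residue and carry the flooring through exactly; the general upper bound and the ``sharp up to an additive constant'' claim should follow fairly mechanically from the above once the $O(n)$ contributions are controlled.
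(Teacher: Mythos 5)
Your structural outline is essentially the paper's: classify the $(P_3\cup K_2)$-free links (matching, star, or a graph on at most $4$ vertices), let the matching-link vertices carry a Steiner-type system, let star-link vertices carry ``pair'' hyperedges over $M$, and optimize the split; the lower-bound construction is identical to the paper's. Your involution observation (the centre map $c$ pairs up the big-star vertices) is correct, but it is not needed for the count: the paper gets the same bound on star-type hyperedges simply from the fact that a matching-link vertex and a star-link vertex lie in at most one common hyperedge, which gives $m|S_1|/2$ directly, and its star threshold is $2$ rather than $7$.

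The genuine gap is your treatment of the exceptional vertices, i.e.\ those whose link is a star with at most $6$ edges or a subgraph of $K_4$. You park them in $M$, declare their $O(n)$ hyperedges ``negligible for the general bound,'' and defer the exact statements to an unspecified stability argument. But every assertion of the theorem is exact or within an additive constant: the inequality for all $n>33$, sharpness up to $O(1)$, and equality for $n=8k+5$. An additive $O(n)$ slack destroys all three, since the construction and the claimed bound differ by $O(1)$, not $O(n)$, so as written you prove only an asymptotic version of the statement. The paper closes precisely this hole with no stability argument: each such vertex has $\cH$-degree at most $6$, so if there are $x$ of them the total is at most $\frac{m(m-1)}{6}+\frac{m(n-m-x)}{2}+6x$; optimizing first in $m$ and then in $x$ shows the maximum is attained at $x=0$ once $n>33$ (an exceptional vertex gains at most $6$ hyperedges but forfeits a pair-slot worth about $m/2\approx 3n/8$), and this trade-off is exactly where the hypothesis $n>33$ enters and where the exact bound, and hence the $n=8k+5$ equality after taking floors, comes from. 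To repair your argument you must run this explicit trade-off rather than discard the exceptional vertices; your threshold of $7$ for stars is then harmless, since star-link vertices with $2$--$6$ edges also have degree at most $6$ and can be folded into the same parameter $x$.
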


The rest of this paper is organized as follows. In Section 2, we prove upper bounds for the Tur\'{a}n numbers of suspensions of a class of graphs, and provide the proofs of Theorems \ref{asy} and \ref{shar}. The proofs of Proposition \ref{simpli} and Theorem \ref{fores} are provided in Section 3. 

\section{Proofs of Theorems \ref{asy} and \ref{shar}}

In the proof of Theorem \ref{asy}, will apply the hypergraph removal lemma \cite{Gow,RSc}.
 
It states that if an $n$-vertex $r$-graph contains $o(n^{|V(\cH)|})$ copies of a hypergraph $\cH$, then all the copies of $\cH$ can be removed by removing $o(n^r)$ hyperedges. In order to use it, we have to show that $\cS^rF$-free $n$-vertex $r$-graphs contain $o(n^{r+k-2})$ copies of $\S^rK_k$. 

For two $r$-graphs $\cH$ and $\cH'$, let $\cN(\cH,\cH')$ denote the number of copies of $\cH$ in $\cH'$, and let $\ex_r(n,\cH,\cH')$ denote the maximum possible number of copies of $\cH$ in an $n$-vertex $\cH'$-free $r$-graph. When $r=2$, we write $\ex(n,\cH,\cH')$ instead of $\ex_2(n,\cH,\cH')$.
For two graphs $G$ and $G'$, Alon and Shikhelman \cite{AS} characterized when $\ex(n,G,G')=o(n^{|V(G)|})$. First, we extend this to hypergraphs. A \textit{blowup} of an $r$-graph $\cH$ is an $r$-graph obtained from $\cH$ by replacing each vertex $v\in V(\cH)$ by a new independent set $W(v)$ of size at least one, and each hyperedge $v_1v_2\dots v_r$ by a complete $r$-partite $r$-graph with parts $W(v_1),W(v_2),\dots, W(v_r)$. The \textit{$s$-blowup} of $\cH$ is the blowup that satisfies $|W(v)|=s$ for each vertex $v\in V(\cH)$. 

\begin{proposition}\label{alsh}
    Let $H$ and $H'$ be two $r$-graphs. Then $\ex_r(n,H,H')=\Omega(n^{|V(H)|})$ if and only if $H'$ is not a subhypergraph of a blowup of $H$. Otherwise, $\ex_r(n,H,H')=O(n^{|V(H)|-\varepsilon})$ for some $\varepsilon:=\varepsilon(H,H')>0$. 
\end{proposition}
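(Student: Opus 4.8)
The plan is to mimic the Alon--Shikhelman argument for graphs, adapted to the hypergraph setting, splitting into the two implications.

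First, the easy direction: if $H'$ \emph{is} a subhypergraph of some blowup of $H$, say of the $s$-blowup $H(s)$, then I want to show $\ex_r(n,H,H')=O(n^{|V(H)|-\varepsilon})$. Here I would invoke the standard supersaturation-type fact: an $n$-vertex $r$-graph $G$ containing $\Omega(n^{|V(H)|})$ copies of $H$ must, by a Kővári--Sós--Turán / dependent random choice style argument, contain the $s$-blowup $H(s)$ of $H$ (for $n$ large and any fixed $s$), hence contain $H'$. Concretely, I would pass to a positive fraction of the copies of $H$ that are ``vertex-stable'' — for each vertex $v$ of $H$ the set of vertices of $G$ playing the role of $v$ has size $\Omega(n)$ in many copies — and then use a pigeonhole/DRC argument to find $s$ vertices simultaneously in the $v$-role and adjacent (in the appropriate $r$-partite sense) to the rest, iterating over the vertices of $H$. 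This gives a copy of $H(s)\supseteq H'$, a contradiction; tracking the exponents yields the quantitative bound $O(n^{|V(H)|-\varepsilon})$ with $\varepsilon$ depending only on $H$ and $s$ (hence on $H,H'$). The contrapositive is exactly the first claim. I expect this DRC/blowup extraction step to be the main obstacle: in the hypergraph setting one has to be careful that the ``independent set'' condition inside the blowup is automatically satisfied (vertices in the same part $W(v)$ are never together in a hyperedge of $H(s)$, since they are not together in $H$), and that the iterative choice of the parts $W(v)$ can be done greedily without the parts interfering — this is where the argument is more delicate than for graphs and where I would spend the most care.

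Second, the other direction: if $H'$ is \emph{not} a subhypergraph of any blowup of $H$, I must construct $n$-vertex $H'$-free $r$-graphs with $\Omega(n^{|V(H)|})$ copies of $H$. The natural candidate is a blowup of $H$ itself: take the $\lfloor n/|V(H)|\rfloor$-blowup $\widetilde H$ of $H$ on (at most) $n$ vertices. It has $\Omega(n^{|V(H)|})$ copies of $H$ (one can pick one vertex from each part in $\prod_v |W(v)| = \Omega(n^{|V(H)|})$ ways, and each such choice spans a copy of $H$). And $\widetilde H$ contains no copy of $H'$: any subhypergraph of $\widetilde H$ is a subhypergraph of a blowup of $H$, so if $H'\subseteq \widetilde H$ then $H'$ would be a subhypergraph of a blowup of $H$, contrary to hypothesis. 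This gives $\ex_r(n,H,H')=\Omega(n^{|V(H)|})$.

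Finally I would assemble the dichotomy: the two directions together show that $\ex_r(n,H,H')=\Omega(n^{|V(H)|})$ precisely when $H'$ is not a subhypergraph of a blowup of $H$, and otherwise $\ex_r(n,H,H')=O(n^{|V(H)|-\varepsilon})$ for some $\varepsilon=\varepsilon(H,H')>0$; note also the trivial upper bound $\ex_r(n,H,H')\le \binom{n}{|V(H)|}=O(n^{|V(H)|})$ always holds, so in the first case the order of magnitude is exactly $n^{|V(H)|}$. One routine point to check is that a subhypergraph of a blowup of $H$ is itself (a subhypergraph of) a blowup of $H$ — true since deleting vertices from a blowup of $H$ either empties some part $W(v)$, in which case we get a blowup of $H-v$ which embeds in a blowup of $H$, or shrinks parts, still a blowup of $H$. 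This closes the argument.
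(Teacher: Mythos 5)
Your lower-bound direction is exactly the paper's: the $\lfloor n/|V(H)|\rfloor$-blowup of $H$ is $H'$-free (any subhypergraph of a blowup of $H$ is again contained in a blowup of $H$) and spans $\Omega(n^{|V(H)|})$ copies of $H$. That part is fine.

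The gap is in the upper-bound direction, which is the actual content of the proposition. You invoke as a black box ``the standard supersaturation-type fact'' that $\Omega(n^{|V(H)|})$ copies of $H$ force the $s$-blowup $H(s)$, and then assert that ``tracking the exponents'' gives $O(n^{|V(H)|-\varepsilon})$. But the qualitative supersaturation statement you quote only yields $\ex_r(n,H,H')=o(n^{|V(H)|})$; the polynomial saving $n^{-\varepsilon}$ is precisely the point to be proved, and your sketch never produces it. Moreover, the iterative ``vertex-stable plus pigeonhole/DRC'' extraction you outline is the step most likely to break if done greedily: after fixing $s$ vertices for one role you must guarantee that the surviving family of copies is still dense enough to continue, and also that different copies with permuted or overlapping role assignments do not get conflated. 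The clean way to do all of this --- and the paper's route --- is to randomly partition $V(\mathcal{H})$ into $t=|V(H)|$ parts, keep the at least $m/t^t$ ``nice'' copies in which each vertex of $H$ lands in its own part, encode each nice copy as a hyperedge of an auxiliary $t$-partite $t$-uniform hypergraph, and then apply Erd\H{o}s's theorem on complete $t$-partite $t$-graphs: if this auxiliary hypergraph had more than $n^{t-\varepsilon(s,t)}$ hyperedges it would contain a complete $t$-partite subhypergraph with parts of size $s$, which (by minimality of the nice copies' structure) forces an $s$-blowup of $H$, hence $H'$, inside $\mathcal{H}$. That quantitative theorem of Erd\H{o}s (or an equally precise dependent-random-choice computation with explicit exponents) is the missing ingredient; without naming and using it, your argument does not establish the stated bound $O(n^{|V(H)|-\varepsilon})$. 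Your side remark about independence inside the parts is correct but not the real issue, since subhypergraph containment only requires the blowup's hyperedges to be present.
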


The proof closely follows the argument for the case $r=2$ in \cite{AS}, we include it here for completeness. 

\begin{proof}[\bf Proof]
  If $H'$ is not a subhypergraph of a blowup of $H$, then the $\lfloor n/|V(H)|\rfloor$-blowup of $H$ is $H'$-free and contains $\Omega(n^{|V(H)|})$ copies of $H$.

    Assume now that $H'$ is a subhypergraph of a blowup of $H$. Then there is a smallest $s:=s(H,H')$ such that $H'$ is a subhypergraph of the $s$-blowup of $H$. Consider an $n$-vertex $H'$-free hypergraph $\cH$ and let $m$ denote the number of copies of $H$ in $\cH$. Now we consider a random partition of $V(\cH)$ to $t:=|V(H)|$ parts $V_1,\dots,V_t$. Let $u_1,\dots, u_t$ be the vertices of $H$, and we consider the copies of $H$ in $\cH$ where $u_i$ belongs to $V_i$ for every $i$. We call such copies of $H$ \textit{nice}. The expected number of nice copies is $m/t^t$, thus there is a partition such that at least $m/t^t$ copies of $H$ is nice. We fix that partition $V_1,\dots,V_t$.

    Now we construct a $t$-partite $t$-graph $\cH'$ with vertex set $V(\cH)$ where the parts are from the above partition. A $t$-set with vertices $v_i\in V_i$ for $i\le t$ forms a hyperedge of $\cH'$ if $\cH$ contains a copy of $H$ on these vertices, with $v_i$ playing the role of $u_i$. 
    In other words, a $t$-set forms a hyperedge of $\cH'$ if there is on these $t$ vertices one of the at least $m/t^t$ copies of $H$ that are nice with respect to the fixed partition.
    
    Therefore, $\cH'$ has at least $m/t^t$ hyperedges. Assume that $\cH'$ contains a complete $t$-partite subhypergraph of $\cH'$ with each part $U_i\subset V_i$ of order $s$. This means that for each hyperedge $u_{i_1}\dots u_{i_r}$ of $H$, and each vertex $v_{i_j} \in U_{i_j}$, the hyperedge $v_{i_1}\dots v_{i_r}$ is in $\cH$, otherwise extending these vertices with vertices form the other parts, we would not get a nice copy of $H$. Therefore, there is an $s$-blowup of $H$ in $\cH$ with parts $U_1,\dots, U_t$, a contradiction.
    
    We obtained that $\cH'$ does not contain a complete $t$-partite $t$-graph with parts of size $s$. By a result of Erd\H os \cite{erd}, there exists an $\varepsilon:=\varepsilon (s,t)>0$ such that $\cH'$ contains at most $n^{t-\varepsilon}$ hyperedges. Therefore, $\cN(H,\cH)=m\le t^tn^{t-\varepsilon}$, completing the proof.
\end{proof}

We are now ready to prove Theorem \ref{asy}. Recall that it states that for any graph $F$ with $\chi(F)=k$, we have $\ex_r(n,\cS^rF)\le \ex_r(n,\cS^rK_k)+o(n^r)$.

\begin{proof}[\bf Proof of Theorem \ref{asy}]
    First we show that for any $F$ with chromatic number $k$ and any $r$, we have that a blowup of $\S^rK_k$ contains $\S^rF$. Indeed, we can blow up the vertices of $K_k$ to obtain a graph $F'$ that contains $F$. Then the additional $r-2$ vertices of $\cS^rK_k$ with the edges of $F'$ form $\S^rF'$, which contains $\S^rF$.

    Using Proposition \ref{alsh}, we have that any $\S^rF$-free $r$-graph $\cG$ contains $o(n^{|V(\S^rK_k)|})$ copies of $\S^rK_k$. By the hypergraph removal lemma, we can remove those copies of $\cS^rK_k$ by removing $o(n^r)$ hyperedges. The resulting hypergraph $\cG'$ is $\S^rK_k$-free, and thus has at most $\ex_r(n,\S^rK_k)$ hyperedges. Therefore, $\cG$ has at most $|E(\cG')|+o(n^r)\le\ex_r(n,\cS^rK_k)+o(n^r)$ hyperedges, completing the proof.
\end{proof}

\smallskip

Let us now work toward the proof of Theorem \ref{shar}. We will prove the statement for a larger class of graphs.

Let $H(t)$ denote the following graph. We take two copies of $K_{t,t}$ and a triangle. We take three partite sets of the two copies of $K_{t,t}$ and take a vertex of the triangle for each of those parts. Then we join each vertex of those parts to the corresponding vertex in the triangle. In other words, two vertices of the triangle extend one of the copies of $K_{t,t}$ to a $K_{t+1,t+1}$, while the third vertex extends the other copy of $K_{t,t}$ to a $K_{t,t+1}$.  Let $Q(t)$ denote the following graph. We take independent sets $U_1,U_2,U_3,U_4$ each of order $t$, and a triangle with vertices $v_1,v_2,v_3$. For each $i\le 3$, we join $v_i$ and each vertex of $U_4$ to each vertex of $U_i$.

\begin{proposition}\label{triv}
For any $t$ there exists a constant $c:=c(t)<1/4$ such that if $n$ is large enough and an $n$-vertex graph $G$ contains a triangle but does not contain $H(t)$ nor $Q(t)$, then it contains a vertex of degree at most $cn$.
\end{proposition}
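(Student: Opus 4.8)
The plan is a proof by contradiction: suppose $\delta(G)>cn$ for a constant $c<\tfrac14$ (it will be enough to take $c$ a little above $\tfrac29$, fixed at the very end), fix a triangle $v_1v_2v_3$, and show $G$ must then contain $H(t)$ or $Q(t)$. Throughout put $N_i'=N(v_i)\setminus\{v_1,v_2,v_3\}$ (so $|N_i'|\ge cn-2$), $U=N_1'\cup N_2'\cup N_3'$, and $W'=V(G)\setminus U$; note $v_1,v_2,v_3\in W'$.

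First I would run several reductions, each producing a copy of $H(t)$ out of a complete bipartite subgraph sitting in the right place. (i) If $G[N_i']$ contains $K_{3t,3t}$, then since $v_i$ is complete to $N_i'$ one reads off $H(t)$ directly — one vertex from each part of the $K_{3t,3t}$ together with $v_i$ is the triangle, $v_i$ being the apex that extends the second $K_{t,t}$; so by Kővári–Sós–Turán we may assume $e(G[N_i'])=o(n^2)$ for every $i$. (ii) If for some $i\neq j$ there is a $K_{t,t}$ with disjoint parts in $N_i'$ and in $N_j'$, then combining it with a $K_{t,t}$ having one part in $N_k'$ — one of the $\Omega(n^{2t})$ such copies guaranteed by supersaturation from $\delta(G)>cn$, chosen to avoid the bounded vertex set already in use — we again get $H(t)$ (now with $v_k$ as the apex); hence we may assume no such $K_{t,t}$ exists, which forces $e(G[U])=o(n^2)$. (iii) If $|N_i'\cap N_j'|\ge\iota n$ for some $i\neq j$, set $I=N_i'\cap N_j'$: then the bipartite graph $(I,W')$ is dense, and a dependent–random–choice argument yields $u\in I$ and a $t$-set $A_1\subseteq I$ with at least $t$ common neighbours $B_1$ inside $W'\cap N(u)$; now $\{v_i,u,v_j\}$ is a triangle, $v_i$ and $u$ extend the $K_{t,t}$ on $(A_1,B_1)$ to a $K_{t+1,t+1}$, and $v_j$ is the apex of a $K_{t,t}$ with a part in $N_j'$ — another copy of $H(t)$. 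So we may further assume $|N_i'\cap N_j'|<\iota n$ for all $i\neq j$, whence $|U|\ge(3c-3\iota)n-O(1)$ and $|W'|\le(1-3c+3\iota)n+O(1)$.

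Next, $e(G[U])=o(n^2)$ together with $\delta(G)>cn$ gives $e(N_i',W')\ge|N_i'|cn-o(n^2)$ (and along the way $|W'|\ge cn-o(n)$), so each of $(N_1',W'),(N_2',W'),(N_3',W')$ has density bounded below by a positive constant. Fix small $\gamma,\zeta>0$ and let $Z=\{w\in W':|N(w)\cap N_i'|\ge\gamma n\text{ for every }i\}$; the main dichotomy is on $|Z|$. If $|Z|\ge\zeta n$, apply dependent random choice three times in succession — first against $N_1'$, shrinking $Z$ to the common neighbourhood of a random $r$-set $T_1\subseteq N_1'$ (still linear since every vertex of $Z$ is $N_1'$-heavy), then against $N_2'$, then against $N_3'$ — to get a linear-sized $A''\subseteq Z$ and $r$-sets $T_i\subseteq N_i'$ with $r\ge4t$ and $T_1\cup T_2\cup T_3\subseteq N(s)$ for all $s\in A''$; then any $t$-subset $S\subseteq A''$ disjoint from the triangle has $\ge4t$ common neighbours in each $N_i'$, and extracting disjoint $U_i\subseteq N_i'$ from these gives a copy of $Q(t)$ with $U_4=S$ and triangle $v_1v_2v_3$. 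If instead $|Z|<\zeta n$, a counting argument closes the case: the number $a_i$ of $w\in W'$ with $|N(w)\cap N_i'|\ge\gamma n$ is $\ge(c-\gamma/c-o(1))n$ (else $(N_i',W')$ is too sparse), while $a_1+a_2+a_3\le 2|W'|+|Z|$ because every $w\notin Z$ is $i$-heavy for at most two $i$; combining, $|W'|\ge\big(\tfrac{3c}{2}-O(\gamma+\zeta)-o(1)\big)n$, contradicting $|W'|\le(1-3c+3\iota)n+O(1)$ as soon as $c>\tfrac29+O(\iota+\gamma+\zeta)$.

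Finally choose $\iota,\gamma,\zeta$ small enough that $\tfrac29+O(\iota+\gamma+\zeta)<\tfrac14$, and then any $c$ in the non-empty interval $\big(\tfrac29+O(\iota+\gamma+\zeta),\tfrac14\big)$: for $n$ large (depending on $t$), every case above produces $H(t)$ or $Q(t)$, a contradiction, so $G$ has a vertex of degree $\le cn$. The hard part lies in the last two steps: reduction (iii) and the two dependent–random–choice constructions are where the argument genuinely needs both forbidden graphs, and the deliberately crude counting when $|Z|<\zeta n$ is exactly what keeps the constant strictly below $\tfrac14$. The main bookkeeping burden is making the "$o(n^2)$" savings from Kővári–Sós–Turán and supersaturation uniform, and keeping all the small vertex sets involved pairwise disjoint and disjoint from $\{v_1,v_2,v_3\}$.
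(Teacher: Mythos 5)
Your argument is correct, and while its first phase mirrors the paper's, its endgame takes a genuinely different route and proves more. Like the paper, you fix a triangle and use K\H ov\'ari--S\'os--Tur\'an together with $H(t)$-freeness to make the three neighbourhoods almost disjoint with $o(n^2)$ edges inside their union; your embeddings in (i)--(iii) are valid precisely because in $H(t)$ each triangle vertex must be complete to only \emph{one} part of a $K_{t,t}$, so a $K_{3t,3t}$ inside one $N_i'$, a $K_{t,t}$ straddling two neighbourhoods, or a $K_{t,t}$ between $N_i'\cap N_j'$ and $W'$ each yield $H(t)$, the apex's auxiliary $K_{t,t}$ coming from the $\Omega(n^2)$ edges incident to the relevant neighbourhood. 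The paper's own finish is simpler: the sets $U_i'$ and the outside set each have size about $d$, and if the outside set has size only $d+o(n)$, every typical neighbour sees almost all of it and $Q(t)$ appears; this gives only some $c$ just below $1/4$. Your dichotomy on the set $Z$ of outside vertices that are $\gamma n$-heavy towards all three neighbourhoods, with the iterated pigeonhole (dependent random choice) producing $Q(t)$ already when $|Z|=\Omega(n)$, lets you charge each remaining outside vertex to at most two neighbourhoods, forcing $|W'|\gtrsim\tfrac{3c}{2}n$ against $|U|\gtrsim 3cn$ and hence $c\le 2/9+o(1)$. That constant is essentially optimal: taking a triangle $v_1v_2v_3$, sets $A_i\subseteq N(v_i)$ of size $2b-1$, and sets $B_{12},B_{23},B_{31}$ of size $b$ with $B_{ij}$ complete to $A_i\cup A_j$ and no other edges gives a graph with a triangle, no $H(1)$, no $Q(1)$, and minimum degree $2n/9+1$. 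So your extra work buys a tight bound, although for Theorem \ref{3chrom} any $c<1/4$ suffices; the only remaining work is the routine bookkeeping you flag yourself (uniform KST/supersaturation constants and keeping the $O(t)$ embedded vertices disjoint), which closes without difficulty.
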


We remark that this statement cannot be extended to other graphs besides subgraphs of $H(t)$ and $Q(t)$, since $H(\lfloor (n-3)/4\rfloor)$ and $Q(\lfloor (n-3)/4\rfloor)$ have a minimum degree of at least $\lfloor (n-3)/4\rfloor$. We will use the K\H ov\'ari-S\'os-Tur\'an theorem \cite{KST}, which shows that $\ex(n,K_{t,t})\le cn^{2-1/t}$ for some constant $c$.

\begin{proof}[\bf Proof] 
Let $d$ denote the smallest degree in $G$. 
Let $H'(t)$ denote the graph we obtain from $H(t)$ by deleting the copy of $K_{t,t}$ that has only one partite set joined to some vertices of the triangle. First observe that if we find $H'(t)$ in $G$, then we are done. Indeed, let us assume that we found a copy of $H'(t)$ and let $v$ be the vertex of degree 2 there, i.e., the vertex of the triangle not joined to any vertex of the $K_{t,t}$. Let $A$ be the set of neighbors of $v$ that are not in the copy of $H'(t)$ and let $B$ be the set of vertices that are adjacent to some vertex of $A$ and are not in the copy of $H'(t)$. If $d=\Omega(n)$, then there are $\Omega(n)$ vertices in $A$, incident to $\Omega(n^2)$ edges. Then there is a copy of $K_{t,t}$ consisting of those edges in $A\cup B$, and one part of this $K_{t,t}$ is in $A$, thus this $K_{t,t}$ extends the copy of $H'(t)$ to a copy of $H(t)$, a contradiction.

Let $v_1v_2v_3$ be a triangle in $G$. Let $U_i$ denote the set of neighbors of $v_i$ that are not in this triangle. 
    
\begin{clm}
    \textbf{(i)} $|U_i\cap U_j|=o(n)$. 

    \textbf{(ii)} There are $o(n^2)$ edges inside $U_1\cup U_2\cup U_3$.
\end{clm}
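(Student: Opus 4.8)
The plan is to prove the two parts by contradiction, establishing (ii) first and then using it in (i); in each case the aim is to exhibit a forbidden configuration --- a copy of $H(t)$, of $Q(t)$, or of $H'(t)$, the last of which already yields $H(t)$ by the observation preceding the claim. Throughout I use that $d=\Omega(n)$ (the regime in which the remainder of the proof of Proposition \ref{triv} takes place), together with the K\H ov\'ari-S\'os-Tur\'an bound $\ex(n,K_{s,s})=O(n^{2-1/s})$ in the form: every $n$-vertex graph, and every bipartite graph with both parts of size at most $n$, having $\Omega(n^2)$ edges contains a complete bipartite $K_{s,s}$ for each fixed $s$, its two parts being automatically vertex-disjoint.

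For (ii), suppose there are at least $\varepsilon n^2$ edges inside $U_1\cup U_2\cup U_3$. Charging each such edge to an unordered pair $\{i,j\}$ with $1\le i\le j\le 3$ for which it has an endpoint in $U_i$ and an endpoint in $U_j$, some pair is charged $\Omega(n^2)$ edges. If $i\ne j$, K\H ov\'ari-S\'os-Tur\'an yields a complete bipartite graph with parts $A_i\subseteq U_i$ and $A_j\subseteq U_j$ of size $t$; since $v_i$ is complete to $A_i$ and $v_j$ to $A_j$, the sets $A_i\cup\{v_j\}$ and $A_j\cup\{v_i\}$ span a $K_{t+1,t+1}$, and adjoining $v_k$ produces a copy of $H'(t)$ --- a contradiction. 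If $i=j$, K\H ov\'ari-S\'os-Tur\'an yields a $K_{2t+1,2t+1}$ inside $U_i$ with parts $X,Y$; writing $X=X_1\sqcup\{x_0\}\sqcup X_2$ and $Y=Y_1\sqcup\{y_0\}\sqcup Y_2$ with $|X_1|=|X_2|=|Y_1|=|Y_2|=t$, and using that $v_i$ is complete to $X\cup Y$, one obtains a copy of $H(t)$: the triangle is $\{v_i,x_0,y_0\}$ (the edge $x_0y_0$ together with the apex $v_i$), the first $K_{t,t}$ is $X_1\times Y_1$ extended by adjoining $x_0,y_0$ to its two parts, and the second is $X_2\times Y_2$ extended by adjoining $v_i$ to the part $X_2$ --- again a contradiction. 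Hence there are $o(n^2)$ edges inside $U_1\cup U_2\cup U_3$.

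For (i), suppose $|U_1\cap U_2|\ge\varepsilon n$ and put $W=U_1\cap U_2$. Since $W\subseteq U_1$, part (ii) shows that $W$ spans $o(n^2)$ edges; at most $O(n)$ edges join $W$ to $\{v_1,v_2,v_3\}$; and every vertex of $W$ has degree at least $d=\Omega(n)$. Hence $\Omega(n^2)$ edges run between $W$ and $V(G)\setminus(W\cup\{v_1,v_2,v_3\})$ --- in particular this set has linear size --- and K\H ov\'ari-S\'os-Tur\'an produces a $K_{2t+1,2t+1}$ with parts $A\subseteq W$ and $D\subseteq V(G)\setminus(W\cup\{v_1,v_2,v_3\})$. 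Choosing $x\in A$ and pairwise disjoint $t$-sets $P_A,Q_1\subseteq A\setminus\{x\}$ and $P_D,Q_2\subseteq D$, I would verify: $\{v_1,v_2,x\}$ is a triangle (since $x\in W\subseteq N(v_1)\cap N(v_2)$); the $K_{t,t}$ on $P_A\times P_D$ extends to the $K_{t+1,t+1}$ on $(P_A\cup\{x\})\times(P_D\cup\{v_1\})$ by adjoining the triangle vertices $x$ and $v_1$ to its two parts; and the $K_{t,t}$ on $Q_1\times Q_2$ extends to the $K_{t,t+1}$ on $Q_1\times(Q_2\cup\{v_2\})$ by adjoining the triangle vertex $v_2$. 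All $4t+3$ vertices are distinct, so this is a copy of $H(t)$, a contradiction; therefore $|U_i\cap U_j|=o(n)$ for all $i\ne j$.

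The routine ingredient is the repeated passage, via K\H ov\'ari-S\'os-Tur\'an, from ``$\Omega(n^2)$ edges'' to a complete bipartite subgraph. The step that needs genuine care --- and that carries essentially all the content of the claim --- is checking, in each of the three sub-cases, that the complete bipartite graphs so produced, glued to the triangle through the appropriate apex vertices $v_i$, really do contain $H(t)$ or $H'(t)$: one must fix the role of every vertex, confirm that each edge demanded by the rather rigid structure of $H(t)$ is present even though $G$ may be almost bipartite away from the triangle, and keep all parts pairwise disjoint --- which is precisely why a $K_{2t+1,2t+1}$, rather than merely a $K_{t,t}$, is extracted in the ``edges inside a single $U_i$'' case, and why $|A|\ge 2t+1$ and $|D|\ge 2t$ are needed in part (i).
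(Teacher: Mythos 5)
Your argument is correct and relies on the same toolkit as the paper: K\H{o}v\'ari--S\'os--Tur\'an extraction of complete bipartite graphs with $\Omega(n^2)$ edges, glued onto the triangle through the vertices $v_i$, together with the observation (valid in the regime $d=\Omega(n)$) that a copy of $H'(t)$ can be completed to $H(t)$. The organization, however, is genuinely different. The paper proves (i) first and independently: it applies KST to the graph of edges \emph{incident} to $U_i\cap U_j$, so that one part of the resulting complete bipartite graph automatically lies inside $U_i\cap U_j$, and the $K_{t',t'+2}$ obtained by adding $v_i,v_j$ already contains $H'(t)$; then (ii) is reduced to forbidding a $K_{t+1,t+1}$ inside a single $U_i$ or with one part in $U_i$ and one in $U_j$, again funnelled through $H'(t)$. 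You instead prove (ii) first and deduce (i) from it: since $W=U_i\cap U_j$ spans few internal edges, its $\Omega(n^2)$ incident edges must leave $U_1\cup U_2\cup U_3$'s interior, and bipartite KST between the disjoint sets $W$ and $V(G)\setminus(W\cup\{v_1,v_2,v_3\})$ lets you assemble $H(t)$ directly on the triangle $\{v_1,v_2,x\}$ with $x\in W$; similarly your $i=j$ case of (ii) builds $H(t)$ outright rather than passing through $H'(t)$. Your vertex-by-vertex verification of these embeddings is accurate, and the explicit use of $K_{2t+1,2t+1}$ to keep the pieces disjoint is exactly the right precaution. The one place you are slightly glib is the $i\neq j$ case of (ii): since (i) is not yet available there, $U_i$ and $U_j$ may overlap substantially, so KST applied to the charged edge class does not automatically deliver parts with $A_i\subseteq U_i$ and $A_j\subseteq U_j$. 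This is easily repaired (for instance, charge any edge whose two endpoints share a common $U_\ell$ to the class $\{\ell,\ell\}$, so the genuine cross class lies between the disjoint sets $U_i\setminus U_j$ and $U_j\setminus U_i$; or extract a larger complete bipartite graph and pigeonhole its parts), and the paper is equally terse at the corresponding step, so I would not count it as a gap --- but it deserves a sentence in a full write-up.
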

    
\begin{proof}[\bf Proof of Claim]
Assume that $|U_i\cap U_j|=\Omega(n)$ and let $G_0$ denote the graph formed by the edges that are incident to $U_i\cap U_j$. Then $G_0$ has $\Omega(n^2)$ edges, thus contains $K_{t',t'}$ where $t'=|V(H(t))|$. Clearly, all the vertices in one part belong to $U_i\cap U_j$, thus the copy $K_{t',t'}$ together with $v_i$ and $v_j$ form a $K_{t',t'+2}$ with an additional edge inside the larger part. This contains $H'(t)$, a contradiction proving \textbf{(i)}.

To prove \textbf{(ii)}, we show that there is no $K_{t+1,t+1}$ inside $U_i$, and no $K_{t+1,t+1}$ in $G$ with one part in $U_i$ and other part in $U_j$. Indeed, the first one clearly contains a copy of $H'(t)$ with $v_i$, while the second one is a $K_{t+2,t+2}$ with $v_i$ and $v_j$, and the third vertex of the triangle extends it to a $H'(t+1)$. 
\end{proof} 

Let us return to the proof of the proposition and let $U_i'$ denote the set of vertices in $U_i$ that do not belong to any other $U_j$ and have $o(n)$ neighbors inside $U_1\cup U_2\cup U_3$. Then by the above claim, $|U_i'|=d-o(n)$. Let $U=V(G)\setminus (U_1\cup U_2\cup U_3\cup\{v_1,v_2,v_3\})$. Then each vertex of $U_i'$ has at least $d-o(n)$ neighbors in $U$, in particular $|U|\ge d-o(n)$. If $|U|\ge d+\Omega(n)$, then we are done, since in that case $n\ge |U_1'|+|U_2'|+|U_3'|+|U|\ge 4d+\Omega(n)$, thus $d\le n/4-\Omega(n)$. Hence we can assume that $|U|=d-o(n)$, which implies that each vertex of $U_i'$ is adjacent to all but $o(n)$ vertices of $U$.

Let us take $t$ vertices of each $U_i'$.
We claim that these $3t$ vertices have at least $t$ common neighbors in $U$. Indeed, each vertex of $U$ is a common neighbor of them except for the $o(n)$ vertices that are not adjacent to one of these $3t$ vertices. These $3t$ vertices with $t$ vertices from their common neighborhood in $U$ and with $v_1,v_2,v_3$ form a copy of $Q(t)$, a contradiction. 
\end{proof}

Let us remark that the above proposition implies a sharp bound on the Tur\'an number in the graph case.
Indeed, if we have an $n$-vertex $H(t)$-free and $Q(t)$-free graph with more than $\lfloor n^2/4\rfloor$ edges, then we start by repeatedly deleting the vertices of degree less than $cn$. This results in a graph on $n_0$ vertices with minimum degree at least $cn$. It is easy to see that this graph contains more than $\lfloor n_0^2/4\rfloor$ edges, thus contains a triangle, contradicting Proposition \ref{triv}.
This argument does not work for $r>2$, since we need to consider degrees of $(r-1)$-sets, and we cannot just delete $(r-1)$-sets. Instead, we obtain a similar result by a more complicated proof.

\begin{thm}\label{3chrom} 
Assume that $\ex(n,K_3,F)=o(n^2)$ and if an $F$-free $n$-vertex graph contains a triangle, then it has a vertex of degree less than $cn$ for some $c<1/r$. Then $\ex_r(n,\cS^rF)\le n\binom{n}{r-1}/r^2$ for $n$ sufficiently large.
\end{thm}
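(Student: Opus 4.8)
The plan is to descend from the $r$-graph $\cH$ to the link graphs of its $(r-2)$-sets. Writing $G_S:=\cL(\cH,S)$ for an $(r-2)$-set $S$, the hypothesis that $\cH$ is $\cS^rF$-free says exactly that every $G_S$ is $F$-free. Two double counts reorganise the problem: $r|E(\cH)|=\sum_{T}d_\cH(T)$, where $T$ runs over $(r-1)$-sets and $d_\cH(S\cup\{w\})=\deg_{G_S}(w)$, and $\binom r2 |E(\cH)|=\sum_S e(G_S)$. Hence the target $|E(\cH)|\le n\binom n{r-1}/r^2$ is equivalent to showing that the average codegree of an $(r-1)$-set is at most $n/r$, equivalently that on average $e(G_S)\le (1+o(1))n^2/(2r)$.

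First I would record the two ways the hypotheses bite. From $\ex(n,K_3,F)=o(n^2)$ every $G_S$ has $o(n^2)$ triangles, so $\cH$ contains only $o(n^r)$ copies of $\cS^rK_3$; combined with supersaturation for triangles this keeps each $e(G_S)$ below $(1/4+o(1))n^2$ and, more importantly, forces the link graphs that are actually dense to be essentially triangle-free (near-bipartite). From the degree hypothesis, whenever $G_S$ contains a triangle it has a vertex $w$ with $\deg_{G_S}(w)<c|V(G_S)|\le cn<n/r$, i.e.\ the $(r-1)$-set $S\cup\{w\}$ has codegree below $n/r$; contrapositively, if \emph{every} $(r-1)$-set extending $S$ has codegree $\ge cn$ then $G_S$ is triangle-free.

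The core is to control the $(r-1)$-sets of large codegree. One natural route: (i) pass to a subhypergraph in which every positive codegree is at least $cn$, discarding edges through low-codegree $(r-1)$-sets and charging each discarded edge against the codegree of the set at the moment it falls below $cn$ — it is here that the strict inequality $c<1/r$ is exactly what lets the budget close; (ii) use the observation above to conclude this subhypergraph is $\cS^rK_3$-free, i.e.\ all its link graphs are triangle-free; and (iii) bound it by a consistency argument that generalises the $r=4$, $\cS^4K_3$ analysis of Gunderson and Semeraro \cite{GS}. The point of (iii) is that bounding each triangle-free link graph by Mantel ($e(G_S)\le n^2/4$) loses a factor of order $r$; but the link graphs $G_S$, $G_{S'}$ of two $(r-2)$-sets sharing $r-3$ vertices are strongly correlated — they share the codegree-neighbourhood of the common $(r-1)$-set, which must be an independent set in each — so the near-balanced-bipartite extremal behaviour cannot hold for many $S$ at once, and averaging the resulting local inequalities over all such pairs $(S,S')$ recovers the constant $1/r^2$ rather than $1/(2r)$.

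I expect step (iii) to be the main obstacle: making precise how the shared neighbourhoods of overlapping $(r-1)$-sets obstruct simultaneous density of the links, and extracting from this exactly the constant $1/r^2$ (not merely $1/(2r)$), is the delicate part, and it is also the place where $\ex(n,K_3,F)=o(n^2)$ is genuinely needed, to rule out the dense-but-not-near-bipartite regime. A secondary difficulty is the bookkeeping in step (i): the number of discarded edges is a priori of the same order as the whole target, so the charging has to be done against codegrees in a way that interlocks with the bound coming from (iii), rather than estimated crudely.
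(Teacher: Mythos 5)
Your proposal is a plan rather than a proof, and both of its load-bearing steps are missing or broken. In step (i) you propose to delete edges through $(r-1)$-sets of codegree below $cn$ and ``charge'' them, but no charging scheme is given, and none can be crude: there can be $\Theta(\binom{n}{r-1})$ such sets, each accounting for up to $cn$ deleted edges, so the discarded edges are a priori of order $cn\binom{n}{r-1}$, comparable to (indeed larger than) the target $n\binom{n}{r-1}/r^2$; the assertion that ``$c<1/r$ is exactly what lets the budget close'' is not substantiated. Moreover, deleting edges only lowers codegrees, so the most you can arrange is that all \emph{positive} codegrees are at least $cn$; the degree hypothesis then does not yield step (ii), because the low-degree vertex it provides in a link graph containing a triangle may simply be an isolated vertex (degree $0<cn$), which is perfectly consistent with every positive codegree being large, so the cleaned hypergraph need not be $\cS^rK_3$-free. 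This is exactly the obstruction the paper flags after Proposition \ref{triv}: in the graph case one deletes low-degree vertices, but for $r>2$ one cannot delete $(r-1)$-sets, and the cleaning strategy does not transfer. Finally, step (iii) is not an argument: the ``consistency argument generalising Gunderson--Semeraro'' and the claim that correlations between overlapping links recover $1/r^2$ rather than $1/(2r)$ are precisely the content that would have to be proved, and you yourself identify it as the main obstacle.

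The paper's proof takes a different, complete route that never cleans the hypergraph. It runs the Gunderson--Semeraro double count \cite{GS} of pairs $(A,B)$ with $A\in\cH$, $B\notin\cH$, $|A\cap B|=r-1$ on all of $\cH$, letting $t$ denote the number of copies of $\cS^rK_3$. Each such copy costs at most $3(r-1)$ in the lower bound. On the other side, each copy is assigned, via the degree hypothesis, an $(r-1)$-set of codegree less than $cn$ containing its suspending set, and the hypothesis $\ex(n,K_3,F)=o(n^2)$ is used to show that any single $(r-1)$-set is assigned to at most $\varepsilon n^2$ copies (its triangles live in one $F$-free link graph); hence at least $t/\varepsilon n^2$ of the $a_i$ are below $cn$. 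Since the average codegree is at least $n/r$ when $|E(\cH)|\ge n\binom{n}{r-1}/r^2$ (this is where $c<1/r$ enters), these small $a_i$ force the remaining ones above average and improve the Jensen bound on $\sum a_i^2$ by $\Omega(t/\varepsilon)$, which for small $\varepsilon$ beats the $3t(r-1)$ loss and reduces the whole inequality to the $t=0$ case. Note also that once one has an $\cS^rK_3$-free $r$-graph, the bound $n\binom{n}{r-1}/r^2$ is already what the GS double count gives for every $r$; no new analysis of overlapping link graphs is needed there. The genuine difficulty, which your sketch does not address, is handling the copies of $\cS^rK_3$ without deleting anything.
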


\begin{proof}[\bf Proof]
Let $\cH$ be an $n$-vertex $\cS^rF$-free $r$-graph with at least $n\binom{n}{r-1}/r^2$ hyperedges. We follow the proof of Gunderson and Semeraro \cite{GS}. It involves double counting the set $\{(A,B): A\in \cH, B\not\in \cH, B \text{ is an $r$-set}, |A\cap B|=r-1\}$. Let $t$ denote the number of copies of $\cS^rK_3$ in $\cH$.

A lower bound $|E(\cH')|(n-r)(r-1)$ in an $\cS^rK_3$-free hypergraph $\cH'$ is obtained in \cite{GS} by a simple argument: For each hyperedge $A\in\cH'$, there are $n-r$ ways to pick a vertex $b\not\in A$, and then at least $r-1$ ways to pick a vertex $a\in A$ such that $A\cup \{b\}\setminus \{a\}$ is not in $\cH'$. This holds because in the set $A\cup \{b\}$ there are at most 2 hyperedges and one of them is $A$.

This is modified in our proof the following way. After picking $A$ and $b$, it is possible that there is a copy of $\cS^rK_3$ with vertex set $A\cup \{b\}$. In that case, the lower bound $r-1$ is replaced by $0$. This happens three times for every copy of $\cS^rK_3$, thus our lower bound is $|E(\cH)|(n-r)(r-1)-3t(r-1)$.

An upper bound in an $\cS^rK_3$-free hypergraph $\cH'$  is obtained in \cite{GS} the following way. We let $C_i$ denote the $i$th $(r-1)$-set in an arbitrary ordering, and $a_i$ denote the number of hyperedges containing $C_i$. Then we have $\sum_{i=1}^{\binom{n}{r-1}} a_i=r|E(\cH')|$. We count the pairs by picking their intersection first, thus there are $\sum_{i=1}^{\binom{n}{r-1}} a_i(n-r+1-a_i)$ pairs. This is equal to $(n-r+1)r|E(\cH')|-\sum_{i=1}^{\binom{n}{r-1}} a_i^2$. Then we obtain a lower bound $\frac{1}{\binom{n}{r-1}}(r|E(\cH')|)^2$ on $\sum_{i=1}^{\binom{n}{r-1}} a_i^2$ using Jensen's inequality. There is equality here if $a_i=n/r$ for every $i$. 

This is modified in our proof the following way. Consider a suspended triangle with suspending set $S$ and let $G$ denote its link graph. Then by our assumption, there is a vertex $v$ of degree less than $cn$ in $G$. Let $C_i=S\cup \{v\}$, then we have $a_i<cn$. Let us assign to each suspended triangle such an $(r-1)$-set which contains the suspending vertices of the suspended triangle and is contained in less than $cn$ hyperedges. 

We claim that for any $\varepsilon>0$, if $n$ is large enough then each $(r-1)$-set $C_i$ is assigned to at most $\varepsilon n^2$ suspended triangles. Indeed there are $r-1$ ways to pick the set $S$ of suspending vertices. After that, each suspended triangle that $C_i$ is assigned to has the same set of suspending vertices, thus creates a triangle in the link graph of $S$. That link graph is $F$-free, thus by our assumptions, it contains at most $\varepsilon n^2$ triangles.

We obtained that there are $q\ge t/\varepsilon n^2$ $(r-1)$-sets $C_i$ with $a_i<cn$, without loss of generality they are $C_1,\dots,C_q$. We use this property when bounding $\sum_{i=1}^{\binom{n}{r-1}} a_i^2$. Rather than applying Jensen's inequality directly, we will use that $\sum_{i=q+1}^{\binom{n}{r-1}} a_i^2$ is minimized if all the $a_i$'s with $i>q$ are equal.
We will also use the trivial bound $\sum_{i=1}^{q} a_i^2\ge 0$.

Observe first that since $\cH$ has at least $n\binom{n}{r-1}/r^2$ hyperedges, we have that the average $a_i$ is at least $n/r$ (and is $(1+o(1))n/r$ because of the asymptotic bound from Theorem \ref{asy}). Compared to the case each $a_i$ are equal, 

we have that $\sum_{i=1}^{q} a_i$ is smaller by at least $q(n/r-cn)$, thus $\sum_{i=q+1}^{\binom{n}{r-1}} a_i$ is larger by at least $q(n/r-cn)$. 
To minimize $\sum_{i=q+1}^{\binom{n}{r-1}} a_i^2$, we have to divide $q(n/r-cn)$ evenly, thus $a_i\ge n/r+q(n/r-cn)/(\binom{n}{r-1}-q)$ for $i>q$. Compared to the bound obtained if each $a_1=a_2=\dots =a_{\binom{n}{r-1}}$, we gain at least $2nq(n/r-cn)/r\ge c_2t/\varepsilon$ for some $c_2:=c_2(F,r)$.

As we have described, in \cite{GS} one obtains the inequality $|E(\cH)|(n-r)(r-1)\le (n-r+1)r|E(\cH)|-\frac{1}{\binom{n}{r-1}}(r|E(\cH')|)^2$. Here we obtain instead the inequality $|E(\cH)|(n-r)(r-1)\le (n-r+1)r|E(\cH)|-3t(r-1)-\frac{1}{\binom{n}{r-1}}(r|E(\cH')|)^2-c_2t/\varepsilon$. If $t>1$ and $\varepsilon$ is sufficiently small, then we subtract more on the right hand side, therefore the second inequality implies the first. Then the same way as in \cite{GS}, one obtains the desired bound on $|E(\cH)|$.  
\end{proof} 

Now we are ready to prove Theorem \ref{shar}. Recall that it states that $\ex_4(n,\S^4C_{2k+1})\le n\binom{n}{3}/16$ for $n$ sufficiently large.

\begin{proof}[\bf Proof of Theorem \ref{shar}]
    We will apply Theorem \ref{3chrom}. We have $\ex(n,K_3,C_{2k+1})=o(n^2)$ by a theorem of Gy\H ori and Li \cite{GyoriLi}. Observe that if $t$ is sufficiently large, then $Q(t)$ and $H(t)$ both contain $C_{2k+1}$. Therefore, by Proposition \ref{triv} there is a $c<1/4$ such that if an $n$-vertex $C_{2k+1}$-free graph $G$ contains a triangle, then it contains a vertex of degree at most $cn$. This implies that we can apply Theorem \ref{3chrom} and obtain the desired upper bound. 
    
    The sharpness of the bound for infinitely many $n$ follows from the construction of Gunderson and Semeraro \cite{GS} and Proposition 23 in \cite{GS}, as we have already mentioned.
\end{proof}

We remark that the above proof works for some other 3-chromatic graphs $F$ in place of odd cycles: those that are subgraphs of both $Q(t)$ and $H(t)$ for some $t$ and have $\ex(n,K_3,F)=o(n^2)$. These hold for example if $F$ contains exactly one cycle and that is a triangle, using a result of Alon and Shikhelman \cite{AS} which shows that $\ex(n,K_3,F)=O(n)$ in this case.

\section{Proofs of Proposition \ref{simpli} and Theorem \ref{fores}}

First, let us begin with the proof of Proposition \ref{simpli}, which states that $\ex_r(n,\cS^rF)\le \binom{n}{r-k}\ex_k(n-r+k,\cS^kF)/\binom{r}{k}$.

\begin{proof}[\bf Proof of Proposition \ref{simpli}]
        Let $\cH$ be an $n$-vertex $\cS^rF$-free $r$-graph. Obviously,
        for any $(r-k)$-set $A$ of vertices of $\cH$, the link hypergraph $\cL(\cH,A)$ is $\cS^kF$-free. Note that there are $\binom{n}{r-k}$ ways of picking $A$ and at most $\ex_k(n-r+k,\cS^kF)$ ways to extend $A$ to a hyperedge of $\cH$ for each $A$. Since each hyperedge of $\cH$ is counted $\binom{r}{r-k}$ ways, we have $|E(\cH)|\le \binom{n}{r-k}\ex_k(n-r+k,\cS^kF)/\binom{r}{k}$. This completes the proof.
\end{proof}

Let $\cH$ be an $r$-graph and $U\subseteq V(\cH)$. Denote by $H[U]$ the induced subhypergraph of $\cH$ on $U$. 
Let us continue with the proof of Theorem \ref{fores}. 
Recall that it states that $\ex_3(n,\S^3(P_3\cup K_2)\le \left\lfloor\binom{\lfloor (3n-1)/4\rfloor}{2}/3+(3n-1)(n+1)/32\right\rfloor$, and this bound is sharp if $n=8k+5$, and always sharp apart from an additive constant.

\begin{proof}[\bf Proof of Theorem \ref{fores}]
The lower bound is given by the following hypergraph. We take an $(m,3,2,1)$-design, also known as a Steiner triple system, and let $M$ denote its vertex set. Let $S$ denote the rest of the vertices and we partition $S$ into sets $S_1,\dots,S_{\lfloor(n-m)/2\rfloor}$ of size 2. Each 3-set containing a set $S_i$ and a vertex of $M$ is also a hyperedge. Observe that the link graph of a vertex of $M$ is a matching, and the link graph of a vertex of $S$ is a star, thus the resulting hypergraph is $\cS^3(P_3\cup K_2)$-free. The number of hyperedges is $\frac{m(m-1)}{6}+m\lfloor\frac{n-m}{2}\rfloor$. 

If $n=8k+5$, then we take $m=\lfloor\frac{3n-1}{4}\rfloor=6k+3$. Note that $n-m$ is even. By the Existence Theorem \cite{Ke2}, there exists an $(m,3,2,1)$-design. 
The number of hyperedges is
$$\frac{m(m-1)}{6}+\frac{m(n-m)}{2}=\binom{\lfloor\frac{3n-1}{4}\rfloor}{2}/3+(3n-1)(n+1)/32=12k^2+14k+4.$$

Now we consider the upper bound. Let $\cH$ be an $n$-vertex $\cS^3(P_3\cup K_2)$-free $3$-graph. Denote by $M$ the set of vertices with link graph being a matching (including $M_1$), and let $S=V(\cH)\backslash M$. It is easy to see that for any $v\in S$, the link graph $L_v$ contains a subgraph $P_3$. Let $|M|=m$. Then $|S|=n-m$. We call a hyperedge of $\cH$ \textit{$MMS$-type} if it contains two vertices from $M$ and one vertex from $S$. Similarly, we can define $MMM$-type, $MSS$-type and $SSS$-type hyperedges. 

\begin{clm}\label{clm1}
There is no $MMS$-type hyperedge in $\cH$. 
\end{clm}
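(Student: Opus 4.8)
The plan is to argue by contradiction. Recall first that, by the link-graph reformulation of suspension-freeness stated in the introduction, $\cH$ being $\cS^3(P_3\cup K_2)$-free means precisely that the link graph $L_w$ is $(P_3\cup K_2)$-free for every vertex $w$ of $\cH$. So suppose toward a contradiction that $e=\{x,y,z\}$ is an $MMS$-type hyperedge, with $x,y\in M$ and $z\in S$. Since $e\in\cH$, the pair $\{x,y\}$ is an edge of $L_z$; and since $z\in S$, the graph $L_z$ is not a matching and hence contains a copy of $P_3$. The goal is to exhibit inside $L_z$ a copy of $P_3$ whose vertex set is disjoint from $\{x,y\}$: together with the edge $\{x,y\}$ this is a copy of $P_3\cup K_2$ in $L_z$, witnessed by the suspension vertex $z$, contradicting $\cS^3(P_3\cup K_2)$-freeness.

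The key step is to pin down the local structure of $L_z$ at $x$ and $y$, using that they lie in $M$. I claim that $x$ has degree exactly $1$ in $L_z$, with unique neighbour $y$ (and symmetrically for $y$). Indeed, if $x$ had two distinct neighbours $a\neq b$ in $L_z$, then $\{x,a,z\},\{x,b,z\}\in\cH$, so $L_x$ would contain the two distinct edges $\{a,z\}$ and $\{b,z\}$, which share the vertex $z$; this contradicts $L_x$ being a matching, as $x\in M$. Hence $\deg_{L_z}(x)\le 1$, and since $\{x,y\}\in E(L_z)$ we get $\deg_{L_z}(x)=1$ with neighbour $y$; applying the same argument to $y\in M$ gives $\deg_{L_z}(y)=1$ with neighbour $x$. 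Therefore $\{x,y\}$, together with its edge, is a connected component of $L_z$ isomorphic to $K_2$.

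To finish: a $P_3$ is connected with three vertices, so every copy of $P_3$ in $L_z$ lies inside a single connected component of $L_z$, and in particular cannot lie in the two-vertex component on $\{x,y\}$; hence every $P_3$ of $L_z$ avoids $\{x,y\}$. Since $L_z$ does contain a $P_3$ (because $z\in S$), we obtain a $P_3$ in $L_z$ on three vertices $a,b,c$ with $\{a,b,c\}\cap\{x,y\}=\emptyset$; adjoining the edge $\{x,y\}$ produces a copy of $P_3\cup K_2$ in $L_z$, i.e.\ the hyperedges $\{a,b,z\},\{b,c,z\},\{x,y,z\}$ of $\cH$ form $\cS^3(P_3\cup K_2)$, the desired contradiction. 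I do not anticipate a genuine obstacle: the only real content is the observation that membership of $x$ and $y$ in $M$ forces $\{x,y\}$ to be an isolated edge of $L_z$, after which the $P_3$ guaranteed by $z\in S$ does all the work. The one point to check carefully is that the six vertices $z,x,y,a,b,c$ are genuinely distinct, which holds since $z\notin V(L_z)$ and the $P_3$ was chosen to avoid $\{x,y\}$.
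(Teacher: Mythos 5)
Your proof is correct and takes essentially the same approach as the paper: both hinge on the observation that a vertex of $M$ cannot have two neighbours in the link graph $L_z$ (else its own link graph would contain two edges sharing $z$), combined with the $P_3$ guaranteed by $z\in S$. Your step of first showing $\{x,y\}$ is an isolated edge of $L_z$ simply replaces the paper's short case analysis on how $\{x,y\}$ meets the chosen $P_3$, and is fine.
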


\begin{proof}[\bf Proof of Claim]

Assume that there is such a hyperedge with $u,v\in M$, $w\in S$. Then $u,v\in V(\cL_w)$. Recall that for any $v\in S$, the link graph $\cL_v$ contains a subgraph $P_3$. Without loss of generality, let $P_3=v_1v_2v_3\subseteq \cL_w$. If $\{u,v\}\cap\{v_1,v_2,v_3\}=\emptyset$, then there is a copy of $\cS^3(P_3\cup K_2)$, a contradiction. If $|\{u,v\}\cap\{v_1,v_2,v_3\}|=1$ or $|\{u,v\}\cap\{v_1,v_2,v_3\}|=2$, then we have $d_{\cL_w}(u)\geq2$ or $d_{\cL_w}(v)\geq2$. This implies that $d_{\cL_u}(w)\geq2$ or $d_{\cL_v}(w)\geq2$. Hence the link graph of $u$ or $v$ contains two adjacent edges, a contradiction to $u,v\in M$. 
\end{proof}

We claim that $\cH[M]$ is a linear $3$-graph, i.e. any two vertices are contained in at most one hyperedge. Otherwise, the link graph contains two adjacent edges, a contradiction. Thus, the number of $MMM$-type hyperedges is at most $\binom{m}{2}/\binom{3}{2}=m(m-1)/6$. 

\begin{clm}\label{clm2}
For any $u\in M, v\in S$, there is at most one hyperedge that contains $u$ and $v$. 
\end{clm}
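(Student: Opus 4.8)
The plan is to derive this immediately from the defining property of $M$, namely that every vertex of $M$ has link graph a matching; in fact the hypothesis $v\in S$ will play no role. First I would suppose, for contradiction, that $\cH$ contains two distinct hyperedges both of which contain $u$ and $v$. Since $\cH$ is $3$-uniform, any hyperedge containing the pair $\{u,v\}$ has the form $\{u,v,w\}$ and is thus determined by its third vertex, so the two hyperedges are $\{u,v,w_1\}$ and $\{u,v,w_2\}$ with $w_1\neq w_2$.

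Next I would translate this into the link graph of $u$: the hyperedges of $\cH$ through $u$ correspond exactly to the edges of $\cL_u$, so $\{v,w_1\}$ and $\{v,w_2\}$ are two distinct edges of $\cL_u$. These two edges share the vertex $v$, hence $\cL_u$ contains two adjacent edges (a copy of $P_3$), contradicting $u\in M$. This finishes the argument.

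I do not expect any genuine obstacle here: the claim is essentially the same linearity observation just used to bound the number of $MMM$-type hyperedges, now applied to a pair with one endpoint in $M$ and one in $S$. The only point requiring a little care is the bookkeeping that a $3$-uniform hyperedge containing a fixed pair is pinned down by its remaining vertex, which is precisely what forces the two resulting edges of $\cL_u$ to meet at $v$ rather than being disjoint.
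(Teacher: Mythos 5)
Your proof is correct and follows the same argument as the paper: two hyperedges through the pair $\{u,v\}$ give two edges of $\cL_u$ meeting at $v$, contradicting that the link graph of $u\in M$ is a matching. Your observation that $v\in S$ is irrelevant is accurate; the paper's invocation of Claim \ref{clm1} to place $w_1,w_2$ in $S$ is not actually needed for the contradiction.
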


\begin{proof}[\bf Proof of Claim]
Assume there are two hyperedges, $e_1$ and $e_2$, containing $u$ and $v$, denoted as $uvw_1,uvw_2$, respectively. By Claim \ref{clm1}, we have $w_1,w_2\in S$. Then $d_{\cL_u}(v)\geq2$. Hence the link graph of $u$ contains two adjacent edges, a contradiction. 
\end{proof}

Let $S_1\subseteq S$ denote the set of vertices with link graph being a star with at least 2 edges, and let $S_2=S\setminus S_1$. 

\begin{clm}\label{clm3}
For any $u\in S_2$, $3\leq|V(\cL_u)|\leq4$. 
\end{clm}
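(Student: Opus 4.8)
The plan is to prove the two inequalities separately. The lower bound is immediate: since $u\in S_2\subseteq S$, we already know that $\cL_u$ contains a copy of $P_3$, so $\cL_u$ has at least three (non-isolated) vertices, i.e.\ $|V(\cL_u)|\ge 3$. The work is in the upper bound, for which I would argue by contradiction. Suppose $|V(\cL_u)|\ge 5$. The structural fact I would use repeatedly is that, because $\cH$ is $\cS^3(P_3\cup K_2)$-free and $r=3$, the link graph $\cL_u$ contains no copy of $P_3\cup K_2$ as a subgraph. Fix a copy of $P_3$ in $\cL_u$, say on $v_1,v_2,v_3$ with edges $v_1v_2$ and $v_2v_3$; set $T=\{v_1,v_2,v_3\}$ and $W=V(\cL_u)\setminus T$, so $|W|\ge 2$. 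The first observation is that no edge of $\cL_u$ is disjoint from $T$: otherwise such an edge together with the path $v_1v_2v_3$ would be a copy of $P_3\cup K_2$. In particular $W$ is independent in $\cL_u$ and every $w\in W$ satisfies $\emptyset\neq N_{\cL_u}(w)\subseteq T$.

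Next I would show that no vertex of $W$ is adjacent to $v_1$ or to $v_3$. Suppose some $w\in W$ is adjacent to $v_1$ (the case of $v_3$ being symmetric), and pick $w'\in W\setminus\{w\}$, which is adjacent to some $v_j\in T$. If $j=1$, then $w\,v_1\,w'$ is a $P_3$ and $v_2v_3$ is a disjoint edge; if $j=3$, then $w\,v_1\,v_2$ is a $P_3$ and $w'v_3$ is a disjoint edge; if $j=2$, then $w'\,v_2\,v_3$ is a $P_3$ and $wv_1$ is a disjoint edge. In each case one obtains a copy of $P_3\cup K_2$ in $\cL_u$, a contradiction. Hence $N_{\cL_u}(w)=\{v_2\}$ for every $w\in W$.

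Finally I would split on whether $v_1v_3\in E(\cL_u)$. If $v_1v_3\notin E(\cL_u)$, then every edge of $\cL_u$ is incident to $v_2$: the only edges inside $T$ are $v_1v_2$ and $v_2v_3$, every edge meeting $W$ is of the form $wv_2$ by the previous paragraph, and no edge avoids $T$. Thus $\cL_u$ is a star with centre $v_2$ having at least $|W|+2\ge 4$ edges, which contradicts $u\in S_2$. If instead $v_1v_3\in E(\cL_u)$, pick distinct $w_1,w_2\in W$; then $w_1\,v_2\,w_2$ is a $P_3$ and $v_1v_3$ is a disjoint edge, again producing a copy of $P_3\cup K_2$, a contradiction. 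Therefore $|V(\cL_u)|\le 4$, and together with the lower bound this proves $3\le|V(\cL_u)|\le 4$.

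I expect the only delicate point to be the case in which all vertices of $W$ attach only to the centre $v_2$ of the chosen path: this is exactly where both the hypothesis $u\in S_2$ (so that $\cL_u$ is not a star with at least two edges) and the auxiliary case distinction on the edge $v_1v_3$ are needed. Everything else reduces to the single observation that no edge of $\cL_u$ can avoid $T$, after which locating a disjoint $P_3$ and edge is routine.
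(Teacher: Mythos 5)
Your proof is correct and follows essentially the same route as the paper: the paper's own proof of this claim just asserts that a link graph on at least five vertices, with no isolated vertices and containing $P_3$, "easily" contains $P_3\cup K_2$, and your case analysis is a fully worked-out version of exactly that step. In particular, you correctly make explicit the point the paper's terse wording glosses over, namely that the hypothesis $u\in S_2$ (so $\cL_u$ is not a star with at least two edges) is genuinely needed, since a large star is the one $P_3\cup K_2$-free configuration that would otherwise survive.
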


\begin{proof}[\bf Proof of Claim]
Obviously, $3\leq|V(\cL_u)|$ for any $u\in S_2$. Suppose there is a vertex $u\in S_2$ such that $|V(\cL_u)|\geq5$. Since the link graph has no isolated vertices and $P_3\subseteq \cL_u$, it is easy to see that $P_3\cup K_2\subseteq \cL_u$, a contradiction.
\end{proof}

\begin{clm}\label{clm4}
$|E(\cH[S_1])|=0$.
\end{clm}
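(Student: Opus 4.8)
The plan is to argue by contradiction. Suppose $e=\{u,v,w\}\in E(\cH)$ with $u,v,w\in S_1$, so that each of $\cL_u,\cL_v,\cL_w$ is a star with at least two edges. The entire argument is bookkeeping about where the centers of these three stars lie, and it rests on one elementary fact: in a star with $\ge 2$ edges the center is the unique vertex of degree $\ge 2$, so any two edges of the star meet exactly at the center.

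First I would record the constraints forced by $e$ itself. Since $\{u,v,w\}\in E(\cH)$, the pair $vw$ is an edge of $\cL_u$; as $\cL_u$ is a star, one of $v,w$ is its center, and by the symmetry of $v$ and $w$ in $e$ we may assume the center of $\cL_u$ is $v$. Likewise $uw\in\cL_v$ and $uv\in\cL_w$, so the center of $\cL_v$ is $u$ or $w$ and the center of $\cL_w$ is $u$ or $v$. To pin down $\cL_v$, I use that $\cL_u$ has a second edge: being a star centered at $v$ with $\ge 2$ edges, $\cL_u$ contains an edge $vx$ with $x\notin\{u,v,w\}$, i.e.\ $\{u,v,x\}\in E(\cH)$. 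Hence $ux\in\cL_v$, and together with $uw\in\cL_v$ (note $x\neq w$) the star $\cL_v$ has two distinct edges meeting at $u$, so its center must be $u$.

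The finish is symmetric. The star $\cL_w$ has center $u$ or $v$, and it has a second edge beside $uv$ through that center, so there is $p\notin\{u,v,w\}$ with either $\{u,w,p\}\in E(\cH)$ (center $u$) or $\{v,w,p\}\in E(\cH)$ (center $v$). In the first case $wp\in\cL_u$, but $\cL_u$ is a star centered at $v$, so every edge of it is incident to $v$, contradicting $w,p\neq v$; in the second case $wp\in\cL_v$, but $\cL_v$ is centered at $u$, contradicting $w,p\neq u$. Either way we reach a contradiction, so no hyperedge of $\cH$ lies entirely inside $S_1$, i.e.\ $|E(\cH[S_1])|=0$. The argument is short and I do not expect a real obstacle; the one point requiring care is verifying that the auxiliary vertices $x$ and $p$ are genuinely distinct from $u,v,w$ — this is what guarantees that the two edges exhibited inside a given link star are actually distinct (so they really do force the claimed center) and that the edge finally located in $\cL_u$ or $\cL_v$ is not incident to the purported center. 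Tracking these non-equalities is essentially the whole content of the proof.
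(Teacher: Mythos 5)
Your proof is correct and is essentially the paper's argument: both proceed by contradiction from a hyperedge $\{u,v,w\}\subseteq S_1$ and do bookkeeping on where the centers of the three link stars $\cL_u,\cL_v,\cL_w$ lie inside $\{u,v,w\}$. The paper packages the case analysis slightly differently (each vertex points to the center of its link star, a directed $2$-path among $\{u,v,w\}$ is extracted, and one concludes that some vertex would lie in only one hyperedge, contradicting membership in $S_1$), while you pin down the centers explicitly via auxiliary hyperedges and exhibit a star edge avoiding its center; this is a cosmetic difference only, and your care about $x,p\notin\{u,v,w\}$ is exactly the point that makes the argument go through.
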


\begin{proof}[\bf Proof of Claim]
Assume that $uvw$ is a hyperedge inside $S_1$. Then for each vertex $u$ of the hyperedge, another vertex $v$ is the center of the link graph, thus each hyperedge containing $u$ also contains $v$. Let us take a directed edge $uv$ then. This way we take three directed edges inside $\{u,v,w\}$, an edge going out from each vertex. Then, there is a two-edge directed path here, say from $u$ to $v$ and then from $v$ to $w$. Then each hyperedge containing $u$ also contains $v$ and $w$, thus there is only one such hyperedge, a contradiction. 
\end{proof}

Next, we consider the number of $MSS$-type and $SSS$-type hyperedges. We divide $MSS$-type and $SSS$-type hyperedges into two parts: those containing vertices in $S_2$ are denoted as $E_2$, and the rest as $E_1$. Therefore, $|E_2|\leq\sum_{v\in S_2}d_\cH(v)$. 
By Claim \ref{clm3}, $|E(\cL_u)|\leq |E(K_4)|=6$ for any $u\in S_2$. Then $d_\cH(u)\leq 6$ for any $u\in S_2$. Thus, the number of hyperedges in $E_2$ is at most $$\sum_{v\in S_2}d_\cH(v)\leq 6|S_2|.$$

Now we consider the number of hyperedges in $E_1$. By the definition of $E_1$ and Claim \ref{clm4}, the hyperedges in $E_1$ are all of the $MSS$-type. Therefore, we have $|E_1|\leq \frac{m|S_1|}{2}$ by Claim \ref{clm2}. 
Let $|S_2|=x$. Then $|S_1|=n-m-x$. Denote by $E_0$ the set of $MMM$-type hyperedges. Then 
\begin{eqnarray*}
|E(\cH)|&=& |E_0|+|E_1|+|E_2| \\
&\leq& \frac{m(m-1)}{6}+\frac{m(n-m-x)}{2}+6x  \\
&=& -\frac{1}{3}m^2+(\frac{n}{2}-\frac{x}{2}-\frac{1}{6})m+6x. 
\end{eqnarray*} 
The above expression reaches its maximum value when $m=\frac{3(n-x)-1}{4}$. Therefore, 
\begin{eqnarray*}
|E(\cH)|\leq \frac{3}{16}x^2+(\frac{49}{8}-\frac{3n}{8})x+(\frac{3}{16}n^2-\frac{n}{8}+\frac{1}{48}). 
\end{eqnarray*} 

When $n>33$, the above expression reaches its maximum value when $x=0$. Thus,  
\begin{eqnarray*}
|E(\cH)|&\leq& \frac{3}{16}x^2+(\frac{49}{8}-\frac{3n}{8})x+(\frac{3}{16}n^2-\frac{n}{8}+\frac{1}{48}) \\
&\leq& \frac{3}{16}n^2-\frac{n}{8}+\frac{1}{48} \\
&=& \frac{1}{3}\binom{\frac{3n-1}{4}}{2}+\frac{(3n-1)(n+1)}{32}. 
\end{eqnarray*}

Furthermore, if $n=8k+5$, then we have $$\frac{1}{3}\binom{\frac{3n-1}{4}}{2}+\frac{(3n-1)(n+1)}{32}=12k^2+14k+4+1/12.$$
Since $k$ is an integer, we have 
$$|E(\cH)|\leq 12k^2+14k+4=\binom{\lfloor\frac{3n-1}{4}\rfloor}{2}/3+(3n-1)(n+1)/32.$$

This completes the proof. 
\end{proof}

Let us remark that the 4-uniform case of this problem also seems exciting. An $(n,4,3,1)$-design is a simple $\cS^4(P_3\cup K_2)$-free construction, with $(1+o(1))n^3/24$ hyperedges. We have another construction of roughly the same size. Let us take sets $M$ and $S$ of size $n/2$. We partition the triples inside $M$ to $n/2$ \textit{independent} subhypergraphs, i.e., subhypergraphs in which any two hyperedges  share at most one vertex. It is well-known that we can do this, it is a coloring of the Johnson graph, see e.g., \cite{eb}. For these subhypergraphs we pick distinct vertices from $S$ and take the 4-edges formed by a vertex from $S$ and a hyperedge of the corresponding subhypergraph. Moreover, we take an $(n/2,3,2,1)$-design on $S$ and we take all the 4-edges formed by a hyperedge of this design and a vertex of $M$. It is not hard to see that the resulting hypergraph is $\cS^4(P_3\cup K_2)$-free and has $(1+o(1))n^3/24$ hyperedges.

Assume now that we can partition the triples of an $m$-element set to (roughly) $m/2$ independent subhypergraph, and (roughly) $\binom{m}{3}/8$ 4-cliques. We do not know whether such a partition exists. If it does, we take a set $M$ of $m=2n/3$ vertices and such a partition on $M$. We also take a set $S$ of $n/3$ vertices, and assign each such vertex to one of the independent subhypergraphs in $M$. We take the 4-edges formed by a vertex from $S$ and a hyperedge of the corresponding subhypergraph. We also take the 4-edges formed by the 4-cliques in the partition inside $M$. Finally, we take an $(n/3,3,2,1)$-design on $S$ and we take all the 4-edges formed by a hyperedge of this design and a vertex of $M$. It is not hard to see that the resulting hypergraph is $\cS^4(P_3\cup K_2)$-free and has $(1+o(1))(\binom{m}{3}/2+\binom{m}{3}/8+m\binom{n-m}{2}/3)=(1+o(1))7n^3/162$ hyperedges. The upper bound that follows from Proposition \ref{simpli} and Theorem \ref{fores} is $(1+o(1))3n^3/64$.

\ \

\bigskip
\textbf{Funding}: 
The research of Cheng is supported by the National Natural Science Foundation of China (Nos. 12131013 and 12471334), Shaanxi Fundamental Science Research Project for Mathematics and Physics (No. 22JSZ009).

The research of Gerbner is supported by the National Research, Development and Innovation Office - NKFIH under the grant KKP-133819.

The research of Zhou is supported by the National Natural Science Foundation of China (Nos. 11871040, 12271337, 12371347) and the China Scholarship Council (No. 202406890088).

\end{document}